%&latex
\documentclass[12pt, reqno]{article} %\documentclass[12pt, reqno]{amsart}
%\setlength{\textwidth}{6.0in} \setlength{\oddsidemargin}{0.25in}
%\setlength{\evensidemargin}{0.25in} \numberwithin{equation}{section}
% ********************************* Packages *****************************************************
\usepackage{amsfonts}
\usepackage{amsthm}
\usepackage{amscd}
\usepackage{amssymb}
\usepackage{graphics}
\usepackage{amsmath}
\usepackage[english]{babel}
\usepackage{hyperref}
\usepackage{bbm}
\usepackage{yfonts}
\usepackage{mathrsfs}
\usepackage{ upgreek }
\usepackage{color}
\usepackage{epsfig}
\usepackage{graphicx}

\usepackage{amsthm}
\usepackage{graphicx}
\usepackage{verbatim}
\usepackage{float}
\usepackage{fullpage}
\usepackage{amsfonts}
\usepackage{amscd}

% ******************************** Additional commands ********************************************

\allowdisplaybreaks[4]

% ******************************** Eviatar Additional commands ************************************

\DeclareFontFamily{OML}{rsfs}{\skewchar\font'177}
\DeclareFontShape{OML}{rsfs}{m}{n}{ <5> <6> rsfs5 <7> <8> <9>
rsfs7 <10> <10.95> <12> <14.4> <17.28> <20.74> <24.88> rsfs10 }{}
\DeclareMathAlphabet{\mathfs}{OML}{rsfs}{m}{n}
% ******************************** New Theorems ***************************************************
\addtolength{\hoffset}{-0.5cm} \addtolength{\textwidth}{1cm}

\newtheorem{prop}{Proposition}[section]
\newtheorem*{prop1}{Proposition}
\newtheorem{thm}[prop]{Theorem}
\newtheorem{lem}[prop]{Lemma}

\newtheorem{cor}[prop]{Corollary}
\newtheorem{conj}[prop]{Conjecture}

\newtheorem{defn}[prop]{Definition}
\newtheorem{rem}[prop]{Remark}

\newtheorem{ques}[prop]{Question}

% ******************************* New commands *******************************************************

\newcommand{\BE}{{\mathbb{E}}}

\newcommand{\BN}{{\mathbb{N}}}

\newcommand{\BQ}{{\mathbb{Q}}}
\newcommand{\BR}{{\mathbb{R}}}

\newcommand{\BZ}{{\mathbb{Z}}}

\newcommand{\tbP}{{\textbf{P}}}

\newcommand{\ind}{{\mathbbm{1}}}

% ********************************** Eviatar's shortcuts ***********************************************

\renewcommand{\prob}{{\bf P}}
\newcommand{\bae}{\begin{equation}\begin{aligned}}
\newcommand{\eae}{\end{aligned}\end{equation}}
\newcommand{\ev}{\mathbf{E}}
\newcommand{\pr}{\mathbb{P}}
\newcommand{\Z}{\mathbb{Z}}

% ******************************* Greek shortcuts ******************************************************
\newcommand{\om}{{\omega}}

\newcommand{\ep}{{\epsilon}}
\newcommand{\lam}{{\lambda}}

% ******************************************************************************************************
% *********************************** Begin Document ***************************************************
% ******************************************************************************************************

\begin{document}

\numberwithin{equation}{section} \numberwithin{figure}{section}

\title{The need for speed :\\
Maximizing the speed of random walk in fixed environments}

\author{Eviatar B. Procaccia\footnote{Weizmann Institute of Science}, Ron Rosenthal \footnote{The Hebrew University of Jerusalem}}
\maketitle

\begin{abstract}
We study nearest neighbor random walks in fixed environments of
$\BZ$ composed of two point types : $(\frac{1}{2},\frac{1}{2})$ and
$(p,1-p)$ for $p>\frac{1}{2}$. We show that for every environment
with density of $p$ drifts bounded by $\lam$ we have
$\limsup_{n\rightarrow\infty}\frac{X_n}{n}\leq (2p-1)\lam$, where
$X_n$ is a random walk in the environment. In addition up to some integer
effect the environment which gives the greatest speed is given by
equally spaced drifts.
\end{abstract}

\maketitle

%\newpage
%\tableofcontents
%\newpage

% ********************************************************************************************************************************
% ********************************** Section I - Introduction ********************************************************************
% ********************************************************************************************************************************

\section{Introduction}
The subject of random walks in non-homogeneous environments received much interest in recent decades. There has been tremendous progress in the study of such random walks in a random environment, however not much is known about random walks in a given fixed environment. In this paper we study the maximal speed a nearest neighbor random
walk can achieve while walking over $\BZ$, in a fixed environment
composed of two types of drifts, $\left(p,1-p\right)$ (i.e.
probability $p$ to go to the right, and probability $1-p$ to go to
the left) and $\left(\frac{1}{2},\frac{1}{2}\right)$.

A similar question in the continuous setting was posed by Itai
Benjamini and answered by Susan Lee. In \cite{lee1994optimal} Lee
proves that a diffusion process $dX_t=b(X_t)dt+dB_t$ on the interval
$[0,1]$, with $0$ as a reflecting boundary, $b(x)\geq 0$, and
$\int_0^1 b(x)dx=1$, has a unique $b$ which minimize the expected
time for $X_t$ to hit $1$, given by the step function $2\cdot
\ind_{[1/4,3/4]}$. This result is different in nature from the one
we get for the discrete case as the optimal environment in our case
is given by equally spaced drifts along $\BZ$. Notice that a major difference between Lee's setup and the one in this paper is that in the later the diffusion coefficient and drift are coupled. Another problem similar in spirit  is presented in \cite{berger2008solvency}, however the technical details are completely different. A related question for perturbation of simple random walk by a random environment of asymptotically small drifts, for which the recurrence/transience question becomes more involved is studied in \cite{menshikov2008logarithmic}.   

The question of this paper arose while the first author and Noam Berger tried to give a speed bound for a non Markovian random walks over $\BZ$ and the application will be published in \cite{MERW}.

In order to state the theorem we give a more precise definition of
the environments we study:

\begin{defn}
Given $\frac{1}{2}<p\leq 1$ and $0\leq \lam \leq 1$ we call
$\om:\BZ\rightarrow [0,1]$ a $(p,\lam)$ environment if the
following holds :
\begin{enumerate}
\item For every $x\in\BZ$ either $\om(x)=\frac{1}{2}$ or
$\om(x)=p$.
\item
\begin{equation} \label{erg}
\limsup_{n\rightarrow\infty}\frac{1}{n+1}\sum_{x=0}^{n}\ind_{\om(x)=p}=\lam
.\end{equation}
%\item
%\begin{equation} \label{erg}
%\limsup_{n\rightarrow\infty}\frac{1}{n+1}\sum_{x=-n}^{0}\ind_{\om(x)=p}>0\end{equation}
\end{enumerate}
\end{defn}

$~$\\

Throughout this paper we denote by $\{X_n\}_{n=0}^\infty$ a random walk on $\BZ$ (or
sub interval of it). In addition for a given environment
$\om:\BZ\rightarrow [0,1]$ and a point $x\in\BZ$ we denote by
$\tbP^x_\om$ the law of the random walk, which makes it into a
stationary Markov chain with the following transition probabilities
\[
\tbP^x_\om\left(X_{n+1}=y|X_n=z\right)=\begin{cases}
\omega(z) & \,\, y=z+1\\
1-\omega(z) & \,\, y=z-1\\
0 & \,\,\mbox{otherwise}\end{cases},
\]
and initial distribution
\[
\tbP^x_\om(X_0=x)=1.
\]\\

The goal of this paper is to study the maximal speed a random walk
in $(p,\lam)$ environments can achieve, i.e. the behavior of the
random variable $\limsup_{n\rightarrow \infty}\frac{X_n}{n}$.

We start with a simple observation regarding the random variable
$\limsup_{n\rightarrow \infty}\frac{X_n}{n}$:

\begin{lem}\label{lem_as_constant}
For every $(p,\lam)$ environment $\om$ and every $x\in\BZ$ the
random variable $\limsup_{n\rightarrow \infty}\frac{X_n}{n}$ is a
$\tbP^x_\om$ almost sure constant.
\end{lem}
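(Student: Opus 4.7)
The plan is to fix $c \in \BR$ and show $\tbP^x_\om(\{L \leq c\}) \in \{0,1\}$, where $L := \limsup_{n \to \infty} X_n/n$; letting $c$ vary then forces the distribution function of $L$ to have a single jump, i.e.\ $L$ is $\tbP^x_\om$-almost surely constant.

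The key observation is that $L$ is shift invariant on path space: $L(X_0,X_1,\ldots) = L(X_k,X_{k+1},\ldots)$ for every $k \geq 0$, since shifting indices perturbs $X_{n+k}/(n+k)$ only by $o(1)$. Consequently $A_c := \{L \leq c\}$ belongs to the tail $\sigma$-algebra $\bigcap_{n \geq 0}\sigma(X_n,X_{n+1},\ldots)$. Define $h_c(y) := \tbP^y_\om(A_c)$ for $y \in \BZ$. The Markov property combined with the shift invariance of $A_c$ gives
\[
\tbP^x_\om(A_c \mid X_0,\ldots,X_n) = h_c(X_n) \quad \text{a.s.},
\]
and L\'evy's upward martingale convergence theorem then yields $h_c(X_n) \to \ind_{A_c}$ almost surely as $n \to \infty$.

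I now split into the two standard cases for a nearest-neighbor walk on $\BZ$. In the recurrent case, $X_n$ visits every integer infinitely often, so $h_c(X_n)$ takes every value $h_c(y)$ infinitely often; for the sequence to converge, $h_c$ must be constant on $\BZ$, and since $\ind_{A_c}\in\{0,1\}$ this constant lies in $\{0,1\}$. In the transient case, the $\pm 1$ step structure forces $X_n \to +\infty$ or $X_n \to -\infty$ on each sample path (otherwise $0$ would be visited infinitely often, contradicting transience). On $\{X_n \to +\infty\}$ the walk passes through every sufficiently large integer, so convergence of $h_c(X_n)$ forces $\lim_{y \to +\infty} h_c(y)$ to exist and to coincide with $\ind_{A_c}$ on that event, hence to lie in $\{0,1\}$; an analogous statement holds on $\{X_n \to -\infty\}$.

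The one step that requires a little extra work is ruling out that $L$ takes different values on $\{X_n \to +\infty\}$ and $\{X_n \to -\infty\}$. In a $(p,\lam)$ environment all local drifts $2\om(y)-1$ are non-negative, so a gambler's ruin computation shows $\tbP^x_\om(X_n \to -\infty) = 0$: only $\{X_n \to +\infty\}$ carries positive mass, reducing $\ind_{A_c}$ almost surely to the deterministic constant $\lim_{y \to +\infty} h_c(y) \in \{0,1\}$. This yields $\tbP^x_\om(A_c) \in \{0,1\}$ for every $c$, completing the argument.
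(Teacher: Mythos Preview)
Your proof is correct and takes a genuinely different route from the paper's. The paper argues by contradiction: it approximates the tail event $A_{v,\delta}=\{|\limsup X_n/n - v|<\delta\}$ by a cylinder event $B^M_{v,\delta}\in\sigma(X_1,\ldots,X_M)$, uses the Markov property to produce a starting point $j$ with $\tbP^j_\om(A_{v,\delta})>\tfrac12$, and then, assuming two distinct values $v_1,v_2$ in the support, exploits the transience/recurrence dichotomy (Lemma~\ref{transient_recurrent}) to couple the two corresponding walks and reach a contradiction. Your argument is the cleaner L\'evy/Blackwell approach: the martingale $h_c(X_n)=\tbP^{X_n}_\om(A_c)$ converges to $\ind_{A_c}$, and the nearest-neighbor structure together with $\om\geq\tfrac12$ forces the limit to be deterministic. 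Both proofs ultimately rest on the same structural fact---the walk cannot drift to $-\infty$---but your version avoids the cylinder-set approximation and the two-point contradiction, and generalizes more readily to other tail functionals.

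One small imprecision worth flagging: in the recurrent case you assert that ``$X_n$ visits every integer infinitely often,'' which presumes irreducibility and fails when $p=1$ (a site with $\om=1$ is a one-sided barrier). However, your argument only needs that \emph{some} fixed state---for instance the starting point $x$, or the left endpoint of the eventual recurrent class---is visited infinitely often, so that $h_c(X_n)$ returns to a single deterministic value infinitely often and hence $\ind_{A_c}$ equals that value almost surely. With this trivial rewording the proof goes through for all $\tfrac12<p\leq 1$. (The paper's Lemma~\ref{transient_recurrent} has the same implicit ellipticity issue in its statement, though its application in the paper's proof of the present lemma only uses the weaker fact that $j_2$ is reached from $j_1<j_2$ almost surely, which holds regardless.)
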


The main theorem we prove is an upper bound on the speed of random
walks in $(p,\lam)$ environments:

\begin{thm}\label{Thm:Main_inequality}
For every $(p,\lam)$ environment $\om$ and every $x\in\BZ$
\[
\limsup_{n\rightarrow\infty}\frac{X_n}{n}\leq (2p-1)\lam ,\quad \tbP^x_\om ~~\text{a.s.}
\]
\end{thm}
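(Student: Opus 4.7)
\medskip
\noindent\textbf{Proof plan.}
The plan is to decompose $X_n$ into a martingale plus a term counting the $p$-site visits, and then to show that the asymptotic frequency of such visits is at most $\lam$.

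Set $L_n := \sum_{k=0}^{n-1}\ind_{\om(X_k)=p}$. Since $\ev[X_{k+1}-X_k\mid\CF_k]=2\om(X_k)-1=(2p-1)\,\ind_{\om(X_k)=p}$, the process $M_n := X_n-X_0-(2p-1)L_n$ is a martingale with increments in $[-2,2]$. Azuma--Hoeffding together with Borel--Cantelli then give $M_n/n\to 0$ $\tbP_\om^x$-a.s., and hence
\[
\limsup_{n\to\infty}\frac{X_n}{n} = (2p-1)\,\limsup_{n\to\infty}\frac{L_n}{n}\quad\tbP_\om^x\text{-a.s.}
\]
This reduces the theorem to the almost-sure bound $\limsup_n L_n/n\le\lam$.

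For this I would look at the ``$p$-site skeleton'' of $X$: let $\tau_0<\tau_1<\cdots$ be the successive times at which $\om(X_k)=p$ (if only finitely many such times exist, $L_n$ is bounded and the claim is trivial). The bound $\limsup L_n/n \le \lam$ is equivalent to $\liminf_J \tau_J/J\ge 1/\lam$. Writing $\Delta^+(y)$ and $\Delta^-(y)$ for the distances from a $p$-site $y$ to the neighbouring $p$-sites on its right and left, a gambler's-ruin calculation on the intervening fair segments yields
\[
\ev\bigl[\tau_{j+1}-\tau_j\mid\CF_{\tau_j}\bigr] = p\,\Delta^+(X_{\tau_j})+(1-p)\,\Delta^-(X_{\tau_j}).
\]
A martingale SLLN for the centred increments $(\tau_{j+1}-\tau_j)-\ev[\tau_{j+1}-\tau_j\mid\CF_{\tau_j}]$ (using Bernstein-type tail bounds, since the increments are subexponential rather than bounded) then reduces the task to the almost-sure lower bound
\[
\liminf_{J\to\infty}\frac{1}{J}\sum_{j=0}^{J-1}\bigl[p\,\Delta^+(X_{\tau_j})+(1-p)\,\Delta^-(X_{\tau_j})\bigr] \ge \frac{1}{\lam},
\]
which should follow by telescoping the $\Delta^\pm$'s along the skeleton's trajectory and invoking~\eqref{erg}.

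The main obstacle is this final step. By a similar gambler's-ruin computation, the skeleton $(X_{\tau_j})$ has $\BZ$-drift $\ev[X_{\tau_{j+1}}-X_{\tau_j}\mid\CF_{\tau_j}]=2p-1>0$, so it is itself ballistic and has no invariant probability measure; Birkhoff's theorem cannot be invoked directly. In addition~\eqref{erg} is only a one-sided $\limsup$ statement. I would handle this by grouping the $p$-site indices into dyadic blocks on which the density estimate $|\{y\in[N,2N]:\om(y)=p\}|\le(\lam+o(1))N$ applies, and combining this with a Bernstein bound on how many times the skeleton revisits any particular $p$-site. Making this bookkeeping compatible with the weighted sum above is, I expect, where the bulk of the technical work lies.
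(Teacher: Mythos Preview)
Your martingale decomposition is correct, and the reduction to $\limsup_n L_n/n \le \lambda$ is valid: since $M_n/n \to 0$ almost surely, the theorem is indeed equivalent to this occupation-time bound. The gambler's-ruin formula $\ev[\tau_{j+1}-\tau_j\mid\CF_{\tau_j}]=p\,\Delta^+(X_{\tau_j})+(1-p)\,\Delta^-(X_{\tau_j})$ is also right.

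The gap is the final step, and it is not a bookkeeping issue. The quantity you must bound from below,
\[
\frac{1}{J}\sum_{j<J}\bigl[p\,\Delta^+(X_{\tau_j})+(1-p)\,\Delta^-(X_{\tau_j})\bigr],
\]
is an ergodic average along the skeleton trajectory, and the skeleton's occupation measure over $p$-sites is highly non-uniform: at $z_i$ the skeleton stays put with probability $p(1-1/d_i)+(1-p)(1-1/d_{i-1})$, so sites with large adjacent gaps are visited disproportionately often. This bias happens to point in the right direction, but quantifying it is precisely the content of the theorem. Your ``telescoping'' does not survive revisits, and ``Bernstein bound on how many times the skeleton revisits any particular $p$-site'' is not a plan---the revisit count at $z_i$ depends on the full gap structure, not just on a local density estimate. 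In effect you have reformulated the theorem as an equally hard occupation-time statement rather than reduced it. (A secondary issue: since \eqref{erg} is only a $\limsup$, the gaps $\Delta^\pm$ need not be bounded, so even the SLLN for the centred $\tau$-increments needs more than a Bernstein bound.)

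The paper's route is entirely different and sidesteps occupation measures. It first solves a finite problem: for the walk on $\{0,\ldots,N\}$ reflected at $0$ with exactly $k$ drift sites, one computes $\ev^0_\om[T_N]$ explicitly as $\frac{N^2}{(2p-1)k+1}$ plus a positive-definite quadratic form in the drift locations, yielding $\ev^0_\om[T_N]\ge \frac{N^2}{(2p-1)k+1}$. For the infinite problem the environment is cut into blocks of length $N$, each block is dominated (via coupling) by its reflected version, and the finite bound is applied blockwise. The decisive step is then the arithmetic--harmonic mean inequality
\[
\frac{1}{n}\sum_{k=1}^n \frac{1}{(2p-1)\lambda_k + 1/N}\;\ge\;\frac{1}{(2p-1)\cdot\frac{1}{n}\sum_{k}\lambda_k + 1/N},
\]
which converts the density constraint $\frac{1}{n}\sum_k\lambda_k\le\lambda+\ep$ directly into $\liminf T_n/n \ge 1/((2p-1)\lambda)$. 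This convexity argument is what does the work that your occupation-time analysis would have to replicate.
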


$~$\\

As a result from the theorem we have the following corollary for random walks in random environments (RWRE):\\

\begin{cor}\label{Cor:RWRE_cor}
Let $P$ be a stationary and ergodic probability measure on
environments $\om$ of $\BZ$ such that  $P(\omega(0)=p)=\lam$ and
$P(\omega(0)=1/2)=1-\lam$ for some $0\le\lam\leq 1$ and
$1/2<p\leq 1$. Let $\{X_n\}$ be a RWRE with environment $\omega$
distributed according to $P$ (for a more precise definition of
RWRE see \cite{zeitouni2001lecture}), then
\[\lim_{n\rightarrow\infty}\frac{X_n}{n}\leq(2p-1)\lam.\]
for $P$ almost every environment $\om$ and $\prob^x_\om$ almost every
random walk in it.
\end{cor}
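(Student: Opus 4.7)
The plan is to reduce Corollary \ref{Cor:RWRE_cor} to the deterministic bound of Theorem \ref{Thm:Main_inequality} by a direct application of Birkhoff's ergodic theorem, followed by a Fubini step. The point is that the hypothesis of Theorem \ref{Thm:Main_inequality} is an almost sure statement about the environment, and under a stationary ergodic law $P$ this is exactly what the ergodic theorem delivers.

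First, I would apply Birkhoff's ergodic theorem to the indicator $f(\om)=\ind_{\om(0)=p}$ under $P$, viewed as an invariant measure for the shift on environments. Since $E_P[f]=P(\om(0)=p)=\lam$, this yields
\[
\lim_{n\to\infty}\frac{1}{n+1}\sum_{x=0}^n \ind_{\om(x)=p} \;=\; \lam \qquad \text{for $P$-a.e.\ } \om.
\]
In particular the limsup condition \eqref{erg} in the definition of a $(p,\lam)$ environment holds (with the limit actually equalling $\lam$), so $P$-almost every realization $\om$ is a $(p,\lam)$ environment in the sense required by Theorem \ref{Thm:Main_inequality}.

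Next, for each $\om$ in this full-$P$-measure set, Theorem \ref{Thm:Main_inequality} gives $\limsup_{n\to\infty}X_n/n \leq (2p-1)\lam$, $\tbP^x_\om$-almost surely. The event $\{\limsup_{n\to\infty}X_n/n\leq (2p-1)\lam\}$ is a countable intersection of Borel sets in the walk, hence jointly measurable in $(\om,X)$, and Fubini applied to the joint law $P(d\om)\,\tbP^x_\om(dX)$ transfers the bound to $\tbP^x_\om$-a.e.\ walk for $P$-a.e.\ environment. To upgrade $\limsup$ to $\lim$ one invokes the classical existence of the almost sure speed in one-dimensional stationary ergodic RWRE (see \cite{zeitouni2001lecture}); in the degenerate boundary case $p=1$ the walk is eventually non-decreasing so convergence is immediate, while for $p<1$ the environment is uniformly elliptic and the standard argument applies.

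The main difficulty has already been absorbed in Theorem \ref{Thm:Main_inequality}; the corollary itself is a soft ergodic-theoretic unpacking, and I do not anticipate a substantive obstacle beyond the routine measurability check for Fubini noted above.
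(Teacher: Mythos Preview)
Your approach is correct and in fact more direct than the paper's. Both proofs begin with Birkhoff's ergodic theorem to obtain $\lim_n \frac{1}{n}\sum_{x=0}^{n-1}\ind_{\om(x)=p}=\lam$ for $P$-a.e.\ $\om$, but they diverge thereafter. You simply apply Theorem~\ref{Thm:Main_inequality} to each such realization, getting $\limsup_n X_n/n\le(2p-1)\lam$ quenched, and then invoke the standard existence of the speed for one-dimensional stationary ergodic RWRE to upgrade the $\limsup$ to a limit. The paper instead goes back through the RWRE machinery of \cite{zeitouni2001lecture}: it introduces the random variables $\bar S,\bar F$, uses the identification of the speed with $1/\BE[T_1]$, and then bounds $\BE[T_1]$ from below via Proposition~\ref{prop:finite_case_prop} (the finite-interval estimate) rather than Theorem~\ref{Thm:Main_inequality}. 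Your route is cleaner, since Theorem~\ref{Thm:Main_inequality} has already packaged the finite estimate together with the passage to the infinite line, and there is no need to re-invoke Proposition~\ref{prop:finite_case_prop} or the explicit speed formula. Two minor remarks: the Fubini step you describe is superfluous, because the conclusion ``for $P$-a.e.\ $\om$, $\tbP^x_\om$-a.s.'' is exactly what you already have after applying Theorem~\ref{Thm:Main_inequality} pointwise in $\om$; and your aside that for $p=1$ ``the walk is eventually non-decreasing'' is not quite accurate (it still fluctuates at sites with $\om(x)=\tfrac12$), though the existence of the speed in that case follows anyway from the regeneration structure at the $p=1$ drift sites.
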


The main idea beyond the proof of Theorem \ref{Thm:Main_inequality}
is an exact calculation of some expected hitting times in a finite segment with a particular environment. We show that the expected hitting time of a random walk starting at the origin and reflected there, to the point $N$, where there are $k$ drift points between the origin and $N$ can be described by
\[
\ev^0_\om[T_N]=\frac{N^2}{(2p-1)\cdot k+1}+\left\langle
H_k(l-b),(l-b)\right\rangle ,\] where $l$ is the vector of drift
positions, $b$ is a fixed vector and $H_k$ is a $k$ by $k$ symmetric
positive definite matrix depending only on $p$. For the full
proposition and definitions see  Proposition
\ref{prop:finite_case_prop}. The last equation implies a lower bound
on $T_N$ and hence, eventually, an upper bound on the speed.

A natural question that arises is whether the inequality of Theorem \ref{Thm:Main_inequality} can be improved. In section \ref{sec:tight} we prove the following results:
\begin{prop}\label{prop:lameqm}
Let $m\in\BN$ and let $\lambda=\frac{1}{m}$. Let $\omega$ be an
environment s.t $\omega(i\cdot m)=p$, $\forall i\in\BZ$, and
$\omega(x)=\frac{1}{2}$, $\forall x\notin\{ i\cdot m:i\in\BZ\}$,
then a random walk $\{X_n\}$ in $\omega$ has the property
\[
\limsup_{n\rightarrow\infty}\frac{X_n}{n}=\lim_{n\rightarrow\infty}\frac{X_n}{n}=(2p-1)\lambda
.\]

\end{prop}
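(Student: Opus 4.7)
The upper bound $\limsup_{n\to\infty} X_n/n \leq (2p-1)\lambda$ is immediate from Theorem~\ref{Thm:Main_inequality}, since $\om$ is a $(p,1/m)$ environment. The task therefore reduces to establishing the matching lower bound and to upgrading $\limsup$ to $\lim$.

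The plan is to exploit the periodicity of $\om$ under translation by $m$. Define $Y_n := X_n \bmod m$; by the periodic structure, $\{Y_n\}$ is an irreducible Markov chain on $\BZ/m\BZ$ whose right-jump probability equals $p$ at state $0$ and $\tfrac12$ at every other state. Let $\pi$ denote its (unique) stationary distribution. The computational core of the argument is to show $\pi(0)=1/m$. For $m\geq 4$ the stationarity equation $\pi P=\pi$ reads $2\pi(j)=\pi(j-1)+\pi(j+1)$ for $j=2,\ldots,m-2$, forcing $\pi(1),\pi(2),\ldots,\pi(m-1)$ to be an arithmetic progression; the two boundary equations at $j=1$ and $j=m-1$ (which involve $\om(0)=p$) pin down its slope and its initial value in terms of $\pi(0)$, after which the normalization $\sum_{j=0}^{m-1}\pi(j)=1$ forces $\pi(0)=1/m$. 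The degenerate cases $m=2,3$ are checked directly.

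With $\pi(0)=1/m$ in hand, I would lift the convergence to $X_n$ via the martingale decomposition
\[
X_n \;=\; M_n \,+\, \sum_{k=0}^{n-1}\bigl(2\om(X_k)-1\bigr),
\]
where $\{M_n\}$ is a martingale (with respect to the natural filtration under $\tbP^0_\om$) whose increments are bounded by $1$. The martingale strong law of large numbers gives $M_n/n\to 0$ a.s., while $2\om(X_k)-1=(2p-1)\ind_{\{Y_k=0\}}$, so Birkhoff's ergodic theorem applied to the finite irreducible chain $\{Y_n\}$ yields
\[
\frac{1}{n}\sum_{k=0}^{n-1}\ind_{\{Y_k=0\}}\;\longrightarrow\;\pi(0)=\frac{1}{m},\qquad\tbP^0_\om\text{-a.s.}
\]
Combining the two displays gives $X_n/n\to(2p-1)/m=(2p-1)\lam$ almost surely, as required. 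The only nontrivial step is the computation $\pi(0)=1/m$; everything else is a routine application of the ergodic theorem for finite Markov chains together with a bounded-increment martingale SLLN. A second, self-contained route closer in spirit to the proof of Theorem~\ref{Thm:Main_inequality} would use Proposition~\ref{prop:finite_case_prop} directly: for the equally spaced environment the drift-position vector $l$ equals the reference vector $b$, so the quadratic form vanishes and $\ev^0_\om[T_{Nm}]=(Nm)^2/((2p-1)N+1)$; a standard renewal argument converts the asymptotic $Nm/\ev^0_\om[T_{Nm}]\to(2p-1)/m$ into the a.s.\ convergence of $X_n/n$ to the same limit.
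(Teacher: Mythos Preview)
Your main argument is correct and takes a genuinely different route from the paper. The paper coarse-grains: it observes the walk only at the successive hitting times of $m\BZ$, computes $\tbP^0_\om(T_m<T_{-m})=p$ and $\ev^0_\om[T_m\wedge T_{-m}]=m^2$, and then reads off the speed as that of a biased walk on $m\BZ$ with mean step time $m^2$, namely $(2p-1)m/m^2$. Your approach instead works at the level of the original walk via the compensator decomposition and the ergodic theorem for the finite modular chain; the key computation $\pi(0)=1/m$ is correct (your arithmetic-progression argument goes through, and the normalization indeed collapses to $m\pi(0)=1$). Your method is more systematic --- it would handle any periodic environment with no extra thought --- while the paper's is shorter and more pictorial for this particular case.

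One caution about your secondary route via Proposition~\ref{prop:finite_case_prop}: the claim that ``the drift-position vector $l$ equals the reference vector $b$'' for the equally spaced environment is false for $p<1$. With reflection at $0$, endpoint $Nm$, and drifts at $l_i=im$ for $i=1,\ldots,N-1$, one has $k=N-1$ and
\[
b_i=\frac{(2p-1)i+(1-p)}{(2p-1)(N-1)+1}\,Nm,
\]
which equals $im$ only when $2i=N$ (or $p=1$). So neither $l=b$ nor the displayed formula $\ev^0_\om[T_{Nm}]=(Nm)^2/((2p-1)N+1)$ holds, and that alternative argument would need to be replaced by a genuine i.i.d.\ renewal decomposition of $T_{Nm}$ along the lattice $m\BZ$ (which is essentially the paper's approach). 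This does not affect the validity of your primary argument.
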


\begin{prop}\label{prop:lamthree}
For every $p$ and $\lam>0$, there exists a $(p,\lam)$ environment
$\omega$, and a constant $D(p)$ such that
\[
\limsup_{n\rightarrow\infty}\frac{X_n}{n}\ge(2p-1)\lam-D(p)\lam^3.
\]
\end{prop}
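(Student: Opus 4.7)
The plan is to construct a $(p,\lam)$ environment with drifts placed as equally as possible along $\BZ$ and apply Proposition \ref{prop:finite_case_prop} to long intervals. Concretely, set $m=\lfloor 1/\lam\rfloor$ and define $\om(x)=p$ precisely when $x=\lfloor i/\lam\rfloor$ for some $i\in\BZ$, and $\om(x)=\tfrac12$ otherwise. Consecutive drifts are then separated by either $m$ or $m+1$ sites and the density of drifts is exactly $\lam$, so $\om$ is a valid $(p,\lam)$ environment. When $\lam$ is rational the environment is periodic; otherwise it is Sturmian. The case $\lam=1/m$ is already covered by Proposition \ref{prop:lameqm}, so we assume $\lam<1/m$.

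Next I would reduce the speed to a hitting-time calculation. For the walk on $\BZ$ in this environment, a standard renewal argument gives
\[
\lim_{n\to\infty}\frac{X_n}{n}\;=\;\lim_{N\to\infty}\frac{N}{\ev^0_\om[\tau_N^{\BZ}]},
\]
where $\tau_N^{\BZ}$ is the hitting time of $N$ by the walk on $\BZ$ starting at $0$. Because a walk transient to the right spends only an expected time bounded by a constant $c(p)$ in $\{x<0\}$, one has $\ev^0_\om[\tau_N^{\BZ}]=\ev^0_\om[T_N]+O(1)$, where $T_N$ is the hitting time of $N$ by the walk reflected at $0$ treated in Proposition \ref{prop:finite_case_prop}. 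So it suffices to upper-bound $\ev^0_\om[T_N]$.

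For $N$ large let $k$ be the number of drifts in $[0,N)$, so $k=\lam N+O(1)$. For our Sturmian placement, the vector $l$ of drift positions satisfies $\|l-b\|_\infty=O(1)$, where $b$ is the equally-spaced vector minimizing the quadratic form in Proposition \ref{prop:finite_case_prop}; hence $\|l-b\|_2^2=O(k)$. Granted a bound $\|H_k\|_{\mathrm{op}}\le C(p)$ uniform in $k$, the proposition gives
\[
\ev^0_\om[T_N]\;\le\;\frac{N^2}{(2p-1)k+1}+C(p)k.
\]
Dividing $N$ by this (plus the $O(1)$ correction) and expanding in the small parameter $k^2/N^2$ yields
\[
\frac{N}{\ev^0_\om[\tau_N^{\BZ}]}\;\ge\;\bigl((2p-1)\lam+O(1/N)\bigr)\bigl(1-C(p)(2p-1)\lam^2\bigr)\;\ge\;(2p-1)\lam-D(p)\lam^3+o(1),
\]
with $D(p)=C(p)(2p-1)^2$, and letting $N\to\infty$ completes the proof.

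The principal obstacle is the uniform bound $\|H_k\|_{\mathrm{op}}\le C(p)$, or, equivalently but possibly easier, a direct estimate of $\langle H_k(l-b),(l-b)\rangle$ for the specific near-uniform vector $l$ above; this must be read off from the explicit matrix $H_k$ produced in the proof of Proposition \ref{prop:finite_case_prop}, and given that its entries depend only on $p$ one expects the off-diagonal entries to decay geometrically (away from the diagonal) so that the operator norm is indeed $O(1)$ in $k$. A secondary issue is justifying the renewal identity for the speed when $\lam$ is irrational (so $\om$ is only almost periodic); this can be handled by approximating $\lam$ by rationals with periodic environments and taking a limit, or directly by an ergodic argument for the environment seen from the walker.
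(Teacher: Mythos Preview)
Your approach is essentially the paper's own: place drifts as evenly as possible, invoke Proposition \ref{prop:finite_case_prop}, bound the quadratic form via a uniform operator-norm bound on $H_k$, translate this into a speed lower bound, and handle irrational $\lam$ by rational approximation. Two points of comparison are worth noting.

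First, your ``principal obstacle'' is in fact already settled in the paper as Lemma \ref{prop:matrixnormbound}: the entries of $H_k$ decay geometrically off the diagonal (powers of $(1-p)/p$), so the $\ell^1$ and $\ell^\infty$ row norms are bounded by a constant $C(p)$ uniformly in $k$, and hence so is $\|H_k\|_2$. That is exactly the estimate you anticipated.

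Second, the paper's drift placement is $l_i=\big\lceil \tfrac{1}{\lam}\big(i+\tfrac{1-p}{2p-1}\big)\big\rceil$ rather than your $\lfloor i/\lam\rfloor$. The extra shift $\tfrac{1-p}{2p-1}$ is chosen precisely so that $l_i$ agrees with $b_i=\tfrac{(2p-1)i+(1-p)}{(2p-1)k+1}N$ up to an $O(1)$ error \emph{uniform in $\lam$}. With your placement the systematic offset $b_i-i/\lam\approx \tfrac{1-p}{(2p-1)\lam}$ means $\|l-b\|_\infty$ is only $O(1/\lam)$, and the naive Cauchy--Schwarz bound you wrote would then give the weaker error $D(p)\lam$ rather than $D(p)\lam^3$. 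So either adopt the paper's shifted placement, or exploit the additional structure that this offset is essentially a constant vector and that the row sums of $H_k$ are close to zero away from the boundary. The paper also avoids your renewal identity for irrational $\lam$ by first treating rational $\lam$ (where the environment is genuinely periodic and the LLN applies directly), and then approximating a general $\lam$ from below by rationals.
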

We also show the lower bound in proposition \ref{prop:lamthree} can
not be improved for all values of $p$.
\begin{prop}\label{prop_Lack_ot_tightness}
Let $\lam>0$ be of the form $\lam=\frac{n}{mn+l}$, such that $\lam\neq\frac{1}{k}$, for all $k\in\BN$. There exists a constant $D=D(n)>0$ such that
and every $(1,\lam)$ environment $\om$ we have
$\limsup_{N\rightarrow\infty}\frac{X_N}{N}\leq \lam-D\lam^3$.
\end{prop}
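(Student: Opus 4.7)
The plan is to reduce the bound on $\limsup X_N/N$ to a deterministic inequality on the gap sequence $d_i := y_i - y_{i-1}$ between consecutive drift sites $\ldots < y_{-1} < y_0 < y_1 < \ldots$ of $\om$, and then to extract the $\lam^3$ correction via an integer-penalty estimate on the average gap. Because $p=1$, each drift deterministically pushes the walker right, so once the walker reaches $y_k$ it cannot descend below it, and between consecutive drifts the walker performs a simple random walk on $\{y_{i-1},\dots,y_i\}$ reflected at $y_{i-1}$. A standard hitting-time computation gives that the first-passage time $\tau_i$ from $y_{i-1}$ to $y_i$ has mean $\ev[\tau_i]=d_i^2$. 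Combining Lemma~\ref{lem_as_constant} with the bound $|X_N/N|\le 1$ and a Birkhoff/renewal argument applied to the environment process viewed from the walker, the a.s.\ constant $v := \limsup X_N/N$ is identified as
\[
v \;=\; \lim_{k\to\infty}\;\frac{\sum_{i=1}^k d_i}{\sum_{i=1}^k d_i^2},
\]
so it suffices to prove the deterministic bound $\limsup_k \sum d_i/\sum d_i^2 \le \lam - D(n)\lam^3$ under the density constraint $\limsup_k k/\sum_{i=1}^k d_i \le \lam$.

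To establish this deterministic inequality, set $L_k := \sum_{i=1}^k d_i$, $u_k := k/L_k$, $\bar d_k := L_k/k = 1/u_k$, and the threshold $\epsilon_0 := \lam^3/(8n^2)$. In Case~A where $u_k \le \lam - \epsilon_0$, Cauchy--Schwarz gives $\sum d_i^2 \ge L_k^2/k$ and hence $\sum d_i/\sum d_i^2 \le u_k \le \lam - \lam^3/(8n^2)$. In Case~B where $u_k > \lam - \epsilon_0$, one checks directly that $\bar d_k \in [1/\lam,\, 1/\lam + \lam/(4n^2)]$, and since $1/\lam = m + l/n$ with $1\le l\le n-1$ has distance at least $1/n$ from $\mathbb{Z}$, this forces $\bar d_k$ to lie at distance at least $3/(4n)$ from $\mathbb{Z}$ (using $\lam\le 1$ and $n\ge 2$, the latter forced by $\lam\neq 1/k$). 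Because each $d_i$ is an integer, $(d_i - \bar d_k)^2 \ge 9/(16n^2)$ for every $i$, and the Pythagorean identity $\sum d_i^2 = L_k^2/k + \sum(d_i-\bar d_k)^2$ yields $\sum d_i^2 \ge L_k^2/k + 9k/(16n^2)$. Inverting and applying $1/(1+x)\le 1-x/2$ on $x\in[0,1]$ gives $\sum d_i/\sum d_i^2 \le u_k/(1+9u_k^2/(16n^2)) \le \lam - 9\lam^3/(32n^2) \le \lam - \lam^3/(8n^2)$. Either way the bound holds with $D(n) = 1/(8n^2)$.

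The main technical obstacle I foresee is the identification $v = \lim\sum d_i/\sum d_i^2$ itself: the density in \eqref{erg} is only a $\limsup$, so the ratio need not converge in general. The cleanest workaround is to pass to a subsequence $k_j\to\infty$ along which $k_j/L_{k_j}\to\lam$ --- exactly the regime in which Case~B applies and the bound is sharp --- and to transfer the resulting bound to the a.s.\ constant $\limsup X_N/N$ via Lemma~\ref{lem_as_constant}. Alternatively one can compute $\ev[X_N]/N = N^{-1}\sum_{t<N}\tbP^0_\om(X_t \in \{y_i\})$ by standard ergodic theory for the Markov chain and apply bounded convergence. The novel content of the proposition lies entirely in the integer-penalty estimate of Case~B, which pinpoints why the correction is exactly of order $\lam^3$ and not $\lam^2$ or better: the obstruction stems from $1/\lam$ having fractional part $l/n \in [1/n,(n-1)/n]$, which forces an $\Omega(1/n^2)$ squared-distance penalty on every single gap and aggregates into a $\Theta(\lam\cdot k)$ excess in $\sum d_i^2$.
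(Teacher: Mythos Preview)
Your deterministic integer-penalty estimate (Cases A and B) is correct and is a genuinely different route from the paper's. The paper instead proves a rearrangement lemma (Proposition~\ref{prop:environment_to_environment}): starting from any $(1,\lam)$ environment one constructs an environment $\nu$ in which every gap equals $m$ or $m+1$ and whose hitting times are no larger in expectation; for such $\nu$ the speed is computed exactly as $1/(2m+1-m(m+1)\lam)$ (Proposition~\ref{prop:limit_proposition_Upsilon environment}), and a short algebraic manipulation then extracts the $\lam^3$ term. Your approach replaces that two-step reduction by the single variance identity $\sum d_i^2 = L_k^2/k + \sum(d_i-\bar d_k)^2$ together with the observation that $\bar d_k$ stays bounded away from $\BZ$; this is more direct and makes the origin of the $\lam^3$ scale transparent.

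The genuine gap is the identification $v=\lim_k \sum d_i/\sum d_i^2$, and it is not merely technical. For a fixed (non-random) environment the ``environment seen from the walker'' has no reason to be ergodic, and Kolmogorov's SLLN for $\sum_{i\le K}\tau_i$ needs a variance condition that fails when the gaps are unbounded, since the variance of $\tau_i$ is of order $d_i^4$. Your subsequence workaround does not help: bounding $\limsup_N X_N/N$ from above requires $T_N/N$ to be bounded below for \emph{all} large $N$, whereas passing to $k_j$ with $k_j/L_{k_j}\to\lam$ controls only a subsequence, and $\liminf_N T_N/N\le\liminf_j T_{y_{k_j}}/y_{k_j}$ goes the wrong way. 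This is precisely why the paper first rearranges to gaps in $\{m,m+1\}$ before invoking the LLN, and it explicitly flags that the bound $l_i-l_{i-1}\le\lceil 1/\lam\rceil$ is what makes Kolmogorov applicable. A clean repair within your framework: first insert additional $p=1$ sites at all multiples of some large $N_0$, as in the proof of Theorem~\ref{Thm:Main_inequality}. By coupling this only increases the speed, it forces every gap to be at most $N_0$ so the LLN goes through, and it shifts the density by at most $1/N_0$, which for $N_0$ large keeps $\bar d_k$ well inside the non-integer window and leaves your Case~B bound intact. One small correction in Case~B itself: from the density hypothesis you only get $u_k\le\lam+o(1)$, so the claimed lower bound $\bar d_k\ge 1/\lam$ should read $\bar d_k\ge 1/\lam - o(1)$; this is harmless for the distance-to-integer estimate provided you take $k$ large.
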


The structure of this paper is as follows: Section 2 deals with a
particular finite case of the problem which stands in the heart of
the proof of the infinite case. Section 3 contains the proof of
theorem \ref{Thm:Main_inequality}. Section 4 deals with RWRE. In
section 5 we discuss tightness of the result. In section 6 we prove
Lemma \ref{lem_as_constant}. Finally, in section 7 we give some
conjectures and open questions regarding the model.

% ********************************************************************************************************************************
% ********************************** Section 2 - Finite environment **************************************************************
% ********************************************************************************************************************************

\section{Finite environment with reflection at the origin}
\label{Finite_case}

We start by analyzing a finite variant of the problem. Consider nearest neighbor
random walks on subsets of $\BZ$ of the form
$\{0,1,\ldots,N\}$, with reflection at the origin, an absorbing
state at $N$, and the rest of the points are either
$(\frac{1}{2},\frac{1}{2})$ or $(p,1-p)$. More precisely we study the following environments :

\begin{defn}
Given  $N\in\BN$, $\frac{1}{2}<p\leq 1$ and $k\in\BN$ such that
$1\leq k \leq N-1$, we call $\om:\{0,1,\ldots,N-1\}\rightarrow [0,1]$
a $(N,p,k)$ environment on $\{0,1,\ldots ,N\}$ if there exists
$L=\{l_i\}_{i=1}^k\subset\BN$ such that
\[0<l_1<l_2<\ldots <l_k<N\]
and
\[
\om(x)=\begin{cases}
1 & \,\, x=0\\
p & \,\, x\in L\\
\frac{1}{2} & \,\, x\in\left\{ 1,\ldots,N-1\right\} \backslash
L\end{cases} .\]
\end{defn}

Throughout this section $T_N$ will denote the first time a random
walk in a $(N,p,k)$ environment $\om$ hits $N$, i.e,
$T_N=\min\{n\geq 0 ~:~ X_n=N\}$. In
addition we use the following notations :\\
$~$\\
1. $[N]=\{0,1,\ldots,N\}$\\
$~$\\
2. $l=(l_1,\ldots,l_k)$\\
$~$\\
and\\
$~$\\
3. $l_0=0$,~~ $l_{k+1}=N$.
$~$\\

The following is the main proposition of this section:\\

\begin{prop} \label{prop:finite_case_prop}
For every $(N,p,k)$ environment $\om$ we have
\begin{equation}
\ev^0_\om[T_N]\geq \frac{N^2}{(2p-1) k+1}.
\end{equation}
In addition there exists a $(N,p,k)$ environment which satisfies equality if and only if both $\frac{(2p-1)\cdot
N}{(2p-1)k+1}$ and $\frac{pN}{(2p-1)\cdot k +1}$ are integers.
Furthermore there exists a $k\times k$ positive definite symmetric matrix
$H_k$, with entries depending only on $p$, such that
\begin{equation}
\ev^0_\om[T_N]=\frac{N^2}{(2p-1) k+1}+\left\langle H_k(l-b),(l-b)\right\rangle,
\end{equation}
where $\left<,\right>$ denotes the standard inner product, and
$b=(b_1,\ldots,b_k)$ is the vector given by
\begin{equation}\label{eq:bis}
b_i=\frac{(2p-1) i+(1-p)}{(2p-1) k +1} N.
\end{equation}
\end{prop}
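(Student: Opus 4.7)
The plan is to reduce $\ev^0_\om[T_N]$ to a quadratic form in the inter-drift spacings $m_j := l_j - l_{j-1}$ (with $l_0 := 0$, $l_{k+1} := N$), minimize that form on the affine set $\sum_j m_j = N$, and then translate the result back to the original $l$-coordinates to read off $H_k$, $b$, and the leading constant.

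First I would apply the standard first-order recursion for birth--death expected hitting times. Writing $\phi_i := \ev^i_\om[T_N]$ and $D_i := \phi_i - \phi_{i+1}$, the Markov property yields $p_i D_i = 1 + q_i D_{i-1}$ together with $D_0 = 1$ from the reflection $\om(0)=1$. Setting $r := (1-p)/p \in [0,1)$, this recursion reads $D_i = D_{i-1} + 2$ between drift points and $D_{l_j} = 1/p + r D_{l_j - 1}$ at a drift point; iterating the two rules in turn gives the closed form
\[
D_{l_j} \,=\, 1 + 2r \sum_{s=1}^{j} r^{j-s} m_s ,
\]
while $D$ grows linearly with slope $2$ inside each segment. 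Summing $\phi_0 = \sum_{i=0}^{N-1} D_i$ segment by segment, using $\sum_j m_j = N$ to absorb linear terms, and re-indexing the resulting double sum as a symmetric sum over unordered pairs collapses $\phi_0$ into the compact Toeplitz form
\[
\ev^0_\om[T_N] \,=\, \sum_{u,\, v = 1}^{k+1} r^{|u-v|}\, m_u m_v \,=\, m^\top M m ,
\]
where $M$ is the $(k+1) \times (k+1)$ matrix with entries $M_{uv} = r^{|u-v|}$.

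Next I would exploit the well-known fact that $M$ (the Kac--Murdock--Szeg\H{o} matrix, equivalently the covariance of an AR(1) sequence) is positive definite with an explicit tridiagonal inverse given by $(1-r^2)M^{-1} = \mathrm{tridiag}(-r,\, 1+r^2,\, -r)$, with the two corner diagonal entries replaced by $1$. A short computation of its row sums, combined with the identities $1+r = 1/p$ and $1-r = (2p-1)/p$, yields $\mathbf{1}^\top M^{-1} \mathbf{1} = (2p-1)k + 1$. Lagrange minimization of $m^\top M m$ subject to $\mathbf{1}^\top m = N$ then produces the minimum value $N^2/((2p-1)k+1)$, attained at $m^*$ with $m^*_1 = m^*_{k+1} = pN/((2p-1)k+1)$ and interior components equal to $(2p-1)N/((2p-1)k+1)$; accumulating $l^*_j = \sum_{s \le j} m^*_s$ reproduces exactly the vector $b$ of (2.3).

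Finally, expressing the affine change of variables $m = A l + c$ with $A$ the $(k+1) \times k$ first-difference matrix and $c = (0,\ldots,0,N)^\top$, and setting $H_k := A^\top M A$, the quadratic $m^\top M m$ becomes $l^\top H_k l + 2\, l^\top A^\top M c + c^\top M c$. Completing the square about its unique stationary point then yields
\[
\ev^0_\om[T_N] \,=\, \frac{N^2}{(2p-1)k + 1} \,+\, \langle H_k(l - b),\, l - b \rangle .
\]
The matrix $H_k$ is positive definite because $M$ is and $A$ has full column rank $k$, and its entries depend only on $p$. Both the lower bound and the equality characterization are now immediate: equality requires $l = b$, and since $l \in \BN^k$ this forces the base $b_1 = pN/((2p-1)k+1)$ and the common gap $b_{j+1} - b_j = (2p-1) N /((2p-1)k+1)$ to be positive integers, which is precisely the stated condition. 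The step I expect to demand the most care is the symmetric re-indexing that turns the iterated recursion for $D_{l_j}$ into the clean Toeplitz form $m^\top M m$; once that is in place, everything reduces to elementary linear algebra on the Kac--Murdock--Szeg\H{o} matrix.
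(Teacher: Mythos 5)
Your proposal is correct, but it takes a genuinely different route from the paper's proof. The paper solves the harmonic equations for $v(x)=\ev^x_\om[T_N]$ directly in the $l$-coordinates, obtaining $v$ as an explicit piecewise quadratic $-x^2+C_jx+D_j$, reads off $f(l)=D_1$ as an explicit degree-two polynomial in $l_1,\dots,l_k$, verifies $\mathbf{grad}f(b)=0$ by substitution, and proves positive definiteness of the (explicitly written) Hessian via a determinant recursion and Sylvester's criterion. You instead pass to the increments $D_i=\phi_i-\phi_{i+1}$ and the spacings $m_j=l_j-l_{j-1}$, which collapses $\ev^0_\om[T_N]$ into the Toeplitz form $m^\top Mm$ with $M_{uv}=r^{|u-v|}$ (I checked the induction for $D_{l_j}$ and the segment-by-segment summation; the linear terms do cancel exactly). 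From there positive definiteness is free ($M\succ 0$ and the difference matrix $A$ has full column rank), the constrained minimum $N^2/((2p-1)k+1)$ and the minimizer drop out of the known tridiagonal inverse of the Kac--Murdock--Szeg\H{o} matrix via $\mathbf{1}^\top M^{-1}\mathbf{1}=(2p-1)k+1$, and accumulating $m^*$ reproduces $b$. What your approach buys is conceptual transparency: the drift structure becomes an AR(1) covariance, and no determinant recursion or guess-and-verify critical point is needed; what the paper's approach buys is a fully explicit $H_k$ and the intermediate formulas for $C_j,D_j$ that are reused elsewhere. Two small remarks: your $H_k=A^\top MA$ is \emph{half} of the Hessian of $f$, which is in fact the normalization that makes the completed-square identity literally true (I verified on $k=1$, $N=3$, $p=1$ that $f=5=\tfrac{9}{2}+\langle H_1(l-b),l-b\rangle$ with your $H_1$), so your version is internally consistent; and in the equality characterization the "common gap is an integer" clause is vacuous when $k=1$, an edge case present in the statement itself rather than in your argument.
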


\begin{proof}
Define $v:[N]\rightarrow \BR$ by $v(x)=\ev^{x}_\om[T_N]$. By conditioning on the first step and using linearity of the
expectation one observes that $v$ satisfies the following
equations :
\bae\label{eq:lineq}
&v(0)=v(1)+1\\
&v(N)=0\\
&v(x)=\frac{1}{2}v(x+1)+\frac{1}{2}v(x-1)+1, \quad&\forall& x\in\left\{ 1,2,\ldots,N-1\right\} \backslash L\\
&v(x)=p\cdot v(x+1)+(1-p)\cdot v(x-1)+1, \quad&\forall& x\in L
.\eae

Restricting ourselves to an interval of the form $[l_{j-1},l_{j}],$
for some $1\leq j\leq N,$ we see that the solution to the equations
\[
v(x)=\frac{1}{2}v(x+1)+\frac{1}{2}v(x-1)+1,~~~~~~~~\forall
l_{j-1}<x<l_j,
\]
is given by $v(x)=-x^2+C_j\cdot x+D_j$ with $C_j$ and $D_j$ two
constants determined by the value of $v$ at $x=l_{j-1}$ and
$x=l_{j}$. Thus one can replace the equations in \eqref{eq:lineq} with the
following ones :
\bae
&v(0)=v(1)+1\\
&v(N)=0\\
&v(x)=-x^{2}+C_{j}x+D_{j}, \quad&\forall& x\in[l_{j-1},l_{j}]\,\,\,\,\,\,\,\,\forall 1\leq j\leq k+1\\
&v(l_{j})=p\cdot v(l_{j}+1)+(1-p)\cdot v(l_{j}-1)+1, \quad&\forall&1\leq j\leq k.
\eae

$~$\\
Solving those equations one  finds that
\bae
C_1&=0\\
C_j&=\frac{2(2p-1)}{1-p}\sum_{i=1}^{j-1}\left(\frac{1-p}{p}\right)^{j-i}l_i,\quad&\text{for }&2\leq j\leq k+1\\
D_{k+1}&=N^2-N\cdot\frac{2(2p-1)}{1-p}\sum_{i=1}^{k}\left(\frac{1-p}{p}\right)^{k+1-i}l_i\\
D_j&=N^2-\frac{2(2p-1)}{p}N\sum_{i=1}^{k}\left(\frac{1-p}{p}\right)^{k-i}l_i\\
&+\frac{2(2p-1)}{p}\sum_{i=j}^{k}l_{i}^2-\frac{2(2p-1)^2}{p^2}\sum_{i=j}^{k}\sum_{m=1}^{i-1}\left(\frac{1-p}{p}\right)^{i-m-1}l_il_m
,\quad&\text{for }& 1\leq j\leq k.
\eae
In particular we get that
\bae
f(l_1,\ldots,l_k):= \ev^{0}_\om[T_N]=
D_1&=N^2-\frac{2(2p-1)}{p}N\sum_{i=1}^{k}\left(\frac{1-p}{p}\right)^{k-i}l_i\\
&+\frac{2(2p-1)}{p}\sum_{i=1}^{k}l_{i}^2
-\frac{2(2p-1)^2}{p^2}\sum_{i=1}^{k}\sum_{m=1}^{i-1}\left(\frac{1-p}{p}\right)^{i-m-1}l_il_m.
\eae
Notice that the last function is a  polynomial of degree two in
$l_1,\ldots,l_k$.

One can check by substitution that the vector $b=(b_1,\ldots,b_k)$,
defined in \eqref{eq:bis}, is a solution to the equation $\textbf{grad}f(l)=0$, which makes $b$ into an extremum point of $f$.
In addition the Hessian of $f$ is constant (not depending on $N$ or $l_1,\ldots ,l_k$) and is given by the matrix
\[
H_k=-\frac{2(2p-1)^{2}}{p^{2}}\cdot\left(\begin{array}{ccccccc}
-\frac{2p}{2p-1} & 1 & \left(\frac{1-p}{p}\right) & \left(\frac{1-p}{p}\right)^{2} & \ldots & \ldots & \left(\frac{1-p}{p}\right)^{k-2}\\
1 & -\frac{2p}{2p-1} & 1 & \left(\frac{1-p}{p}\right) & \left(\frac{1-p}{p}\right)^{2} & \ldots & \left(\frac{1-p}{p}\right)^{k-3}\\
\left(\frac{1-p}{p}\right) & 1 & -\frac{2p}{2p-1} & 1 & \ldots & \ldots & \left(\frac{1-p}{p}\right)^{k-4}\\
\left(\frac{1-p}{p}\right)^{2} & \left(\frac{1-p}{p}\right) & 1 & \ddots\\
\vdots & \vdots &  &  & \ddots\\
\vdots & \vdots &  &  &  & \ddots & 1\\
\left(\frac{1-p}{p}\right)^{k-2} & \left(\frac{1-p}{p}\right)^{k-1}
&  &  &  & 1 & -\frac{2p}{2p-1}\end{array}\right)
,\]

$~$\\
\begin{comment}which shortly is given by
\[
H_k(i,j)=\begin{cases}
\left(-\frac{2(2p-1)^{2}}{p^{2}}\right)\cdot-\frac{2p}{2p-1} & \,\,\, i=j\\
\left(-\frac{2(2p-1)^{2}}{p^{2}}\right)\cdot\left(\frac{1-p}{p}\right)^{|i-j|-1}
& \,\,\, i\neq j\end{cases}~~~~\forall 1\leq i,j\leq k.
\]
\end{comment}
We also define the matrix $M_k$ by
\[H_k\equiv
\left(-\frac{2(2p-1)^{2}}{p^{2}}\right)\cdot M_k.
\]
We notice that for $1\leq j \leq k,$ the $j^{\text{th}}$ principal
minors of $H_k$ and $M_k$ are exactly $H_j$ and $M_j$ respectively.

By subtracting the $(k-1)^{\text{th}}$ column and row multiplied by
$\frac{1-p}{p}$ of $M_k$ from the $k^{\text{th}}$ column and row
respectively, one gets the following recursion formula for the
determinant of $M_k$:
\[
\text{det}(M_k)= \left(-\frac{2p}{2p-1}
-\frac{2(1-p)}{p}-\frac{2(1-p)^2}{p(2p-1)}\right)\text{det}(M_{k-1})
-\left(1+\frac{2(1-p)}{2p-1}\right)^2\text{det}(M_{k-2}).
\]
Therefore, using induction one gets
\bae
&\text{det}(M_k)=(-1)^k\cdot\frac{k(2p-1)+1}{(2p-1)^k}\\
&\text{det}(H_k)=\frac{(k(2p-1)+1)2^k\cdot(2p-1)^k}{p^{2k}}.
\eae
Since $\text{det}(H_k)$ is positive for every $\frac{1}{2}<p\leq 1$ and
$k\in\BN$, it follows by Silvester's criterion (see \cite{Gilber:1991:PDM:115430.115439})
that $H_k$ is a positive definite matrix, and therefore $b$ is the
unique absolute minimum of $f=\ev^{0}_\om[T_N]$. Finally, by rearranging $f$ one can show that
\[
f(l)=\ev^0_\om[T_N]=\frac{N^2}{(2p-1)
k+1}+\left<H_k(l-b),(l-b)\right>.
\]

From the last formula we get $\ev^0_\om[T_N]\geq
\frac{N^2}{(2p-1) k+1}$ and equality holds if and only if $l=b$. One can see from the
definition of $b$ that such $l$ defines a $(N,p,k)$ environment if and only if both
$\frac{(2p-1) N}{(2p-1)k+1}$ and $\frac{pN}{(2p-1) k +1}$
are integers.
\end{proof}

Before turning to the infinite case we give a uniform bound on the norm of the matrices $H_k$, which will be used in Section \ref{sec:tight}.\\

\begin{lem}\label{prop:matrixnormbound}
There exists some finite positive constant $C=C(p)$ such that
\[
\sup_{k\in\BN}\|H_k\|_2\leq C.
\]
\end{lem}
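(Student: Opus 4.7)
The plan is to bound the operator norm by the maximum absolute row sum and exploit the geometric decay of the off-diagonal entries of $H_k$.

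First I would observe that $H_k$ is a real symmetric Toeplitz matrix, whose entries are, from the formula in the proof of Proposition \ref{prop:finite_case_prop},
\[
H_k(i,i)=\frac{4(2p-1)}{p},\qquad H_k(i,j)=-\frac{2(2p-1)^{2}}{p^{2}}\left(\frac{1-p}{p}\right)^{|i-j|-1}\quad(i\neq j).
\]
Crucially, since $\tfrac12<p\le 1$, we have $r:=\frac{1-p}{p}\in[0,1)$, so the off-diagonal entries decay geometrically away from the diagonal.

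Next I would invoke the standard inequality $\|A\|_2\le\sqrt{\|A\|_1\|A\|_\infty}$; for symmetric matrices $\|A\|_1=\|A\|_\infty$, hence $\|H_k\|_2\le\|H_k\|_\infty$, where $\|H_k\|_\infty$ denotes the maximum absolute row sum. For any fixed row $i\in\{1,\ldots,k\}$ I would estimate
\[
\sum_{j=1}^{k}|H_k(i,j)|\;\le\;\frac{4(2p-1)}{p}+\frac{2(2p-1)^{2}}{p^{2}}\sum_{m=1}^{\infty}2\,r^{m-1}\;=\;\frac{4(2p-1)}{p}+\frac{4(2p-1)^{2}}{p^{2}}\cdot\frac{p}{2p-1}\;=\;\frac{8(2p-1)}{p},
\]
where I extended the index range to all of $\BN$ to get a bound independent of $k$ and $i$, and summed the resulting geometric series using $1-r=\frac{2p-1}{p}$.

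Setting $C(p):=\frac{8(2p-1)}{p}$ (with the convention $0^{0}=1$ handling the $p=1$ case, for which only the nearest-neighbor off-diagonals survive and the bound is trivially satisfied) yields $\|H_k\|_2\le C(p)$ uniformly in $k$, as required. No step here is particularly delicate; the only thing to be careful about is the boundary case $p=1$, where the base $r=0$ forces one to interpret $r^{0}=1$, but this only makes the row sum smaller and does not affect the bound.
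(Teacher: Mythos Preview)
Your proof is correct and follows essentially the same approach as the paper: bound the maximum absolute row sum using the geometric decay of the off-diagonal entries, then pass to the operator $2$-norm via $\|H_k\|_2\le\sqrt{\|H_k\|_1\|H_k\|_\infty}$ (and symmetry). Your version is slightly cleaner in that you carry the computation through to the explicit constant $C(p)=\tfrac{8(2p-1)}{p}$, whereas the paper leaves it implicit.
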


\begin{proof}
Fix $k\in\BN$ and for $1\leq i\leq k$ denote by $r_k(i),c_k(i)$  the
$i^{th}$ row and column of the matrix $H_k$ respectively. We notice
that
\bae
\|r_k(i)\|_1 &=
\frac{2(2p-1)^2}{p^2}\cdot\left(\frac{2p}{2p-1}+\sum_{j=0}^{k-1-i}\left(\frac{1-p}{p}\right)^j+\sum_{j=0}^{i-2}\left(\frac{1-p}{p}\right)^j\right)\\
&\leq\frac{4(2p-1)}{p}+\frac{4(2p-1)^2}{p^2}\sum_{j=0}^{k-2}\left(\frac{1-p}{p}\right)^j\\
&\leq\frac{4(2p-1)}{p}+\frac{4(2p-1)^2}{p^2}\sum_{j=0}^{\infty}\left(\frac{1-p}{p}\right)^j=C(p)<\infty,
\eae
where we used the fact that $\frac{1}{2}<p\leq 1$ and therefore
$\frac{1-p}{p}<1$. The matrices $H_k$ are symmetric and therefore the same bound holds for $c_k(i)$.
We therefore get that :
\[
\|H_k\|_1\equiv\sup\{\|H_k v\|_1 : \|v\|_1=1\}=\max_{1\leq j\leq k}\sum_{i=1}^{k}|H_k(i,j)|\leq C(p),
\]
and
\[
\|H_k\|_\infty\equiv\sup\{\|H_k v\|_\infty : \|v\|_\infty=1\}=\max_{1\leq i\leq k}\sum_{j=1}^{k}|H_k(i,j)|\leq C(p).
\]
Using now the following estimate (which can be found for example in \cite{GVL96} Corollary 2.3.2)
\[
\|H_k\|_2\leq \sqrt{\|H_k\|_1\cdot \|H_k\|_\infty}
,\]
we get that for every $k\in\BN$
\[
\|H_k\|_2\leq C(p).
\]
\end{proof}

%*****************************************************************************************************************************************************
%******************************************** Section 3 - Proof of the main theorem ******************************************************************
%*****************************************************************************************************************************************************

\section{Proof of the main theorem}

$~$\\
Fix $\frac{1}{2}< p \leq 1$ and $0\le \lam \leq 1$. We start with the following estimation of $\ev_\om^0[T_N]$ :

\begin{lem}\label{lem:cutenv}
Given two $\BZ$ environments $\om,\bar{\om}$ such that for every $x\in\BZ$,
${\omega}(x)\le\bar{\omega}(x)$. Denote by $T_n,\bar{T}_n$ the hitting times in the environments $\om,\bar{\om}$ respectively, then for every $n>0$, $T_n$ stochastically dominates $\bar{T}_n$, i.e $\prob_\om^0(T_n>t)\ge\prob^0_{\bar{\om}}(\bar{T}_n>t)$.
\end{lem}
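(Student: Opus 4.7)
The plan is to establish the stochastic domination by constructing a monotone coupling of the two walks on a common probability space. Specifically, I build processes $(X_n)_{n\ge 0}$ and $(\bar X_n)_{n\ge 0}$ starting from $0$, with marginal laws $\prob_\om^0$ and $\prob_{\bar\om}^0$ respectively, such that $\bar X_n \ge X_n$ for every $n$ almost surely. Once such a coupling is in hand, if $X_t \ge n$ then $\bar X_t \ge n$ as well; since a nearest neighbor walk changes by $\pm 1$ per step, $\bar X$ must have hit $n$ at some time $\le t$, giving $\bar T_n \le T_n$ pathwise. Taking probabilities yields $\prob_\om^0(T_n > t) \ge \prob_{\bar\om}^0(\bar T_n > t)$ for every $t \ge 0$.

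For the coupling, let $(U_k)_{k \ge 1}$, $(V_k)_{k \ge 1}$ and $(W_k)_{k \ge 1}$ be three mutually independent i.i.d.\ sequences of uniform $[0,1]$ random variables. I define the pair $(X_{n+1},\bar X_{n+1})$ from $(X_n,\bar X_n)$ recursively as follows. If $X_n = \bar X_n = x$, drive both walks with the single variable $U_{n+1}$: set $X_{n+1}=x+1$ iff $U_{n+1} < \om(x)$, and $\bar X_{n+1}=x+1$ iff $U_{n+1} < \bar\om(x)$, with the opposite move otherwise. If $X_n < \bar X_n$, move the two walks independently, using $V_{n+1}$ with threshold $\om(X_n)$ for $X_{n+1}$ and $W_{n+1}$ with threshold $\bar\om(\bar X_n)$ for $\bar X_{n+1}$. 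A routine conditional probability check confirms that the marginal laws are random walks in $\om$ and $\bar\om$ respectively.

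The core of the proof is then an induction on $n$ showing that $\bar X_n - X_n$ is always a non-negative even integer. In the coincident regime, the hypothesis $\om(x) \le \bar\om(x)$ gives the inclusion $\{U_{n+1} < \om(x)\} \subseteq \{U_{n+1} < \bar\om(x)\}$, so whenever $X$ steps right so does $\bar X$; hence the gap either remains $0$ or jumps to $2$, and never becomes negative. In the separated regime the gap is at least $2$ and each walk changes position by $\pm 1$, so the gap changes by at most $2$ in absolute value and therefore stays non-negative (and of the same parity). The main subtlety, and the reason I split the coupling into two regimes, is that when the walks sit at different positions $x < \bar{x}$ nothing forces $\om(x) \le \bar\om(\bar x)$, so a naive common-noise coupling could violate the desired ordering in a single step; the parity buffer provided by an even gap of size at least $2$ is precisely what makes the independent coupling safe on that event.
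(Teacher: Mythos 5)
Your proof is correct and takes essentially the same route as the paper, which also builds a monotone coupling (driving both walks with a single i.i.d.\ uniform sequence) and relies on the same parity observation that the walks can only cross by first meeting. One small quibble with a side remark: at separated positions a common-noise coupling cannot actually break the ordering in a single step either, since the even gap of size at least $2$ can drop only to $0$ --- this is exactly why the paper's simpler one-sequence coupling suffices --- but your two-regime construction is equally valid and your argument stands.
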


\begin{proof}
This lemma follows from a standard coupling argument. Let $U_n\sim
U[0,1]$ be a sequence of i.i.d random variables. Let $\prob_{\om,\bar{\om}}$ be the
joint measure of two processes $X_n$ and $\bar{X}_n$ such that both the
processes at time $n$ move according to $U_n$ and the environments
$\omega$ and $\bar{\omega}$, i.e.
\begin{equation}
\prob_{\om,\bar{\om}}(X_{n+1}=x\pm
1,\bar{X}_{n+1}=\bar{x}\pm1|X_n=x,\bar{X}_n=\bar{x})=\prob_{\om,\bar{\om}}
\left(U_n\lessgtr\omega(x),U_n\lessgtr\bar{\omega}(\bar{x})\right),
\end{equation}
and
\begin{equation}
\prob_{\om,\bar{\om}}\left(\bar{X}_0=0 , X_0=0\right)=1.
\end{equation}
By this coupling whenever the processes meet at some point,
the random walk $\bar{X}_n$ has a higher
probability to turn right. We therefore obtain that $\prob_{\om,\bar{\om}}$ a.s
for every $n\in\BN$, $\bar{X}_n\ge X_n$, thus $\prob_{\om,\bar{\om}}$
a.s $\bar{T}_n\le T_n$.
\begin{comment}
, where $\bar{T}_n$ is the analog stopping
time for the random walk $\bar{X}_n$. Now for every $n,m>0$, \bae
\pr_{\om,\bar{\om}}(T_n\ge m)&=\pr_{\om,\bar{\om}}(T_n\ge
m|\bar{T}_n\ge m)\pr_{\om,\bar{\om}}(\bar{T}_n\ge m)
+\pr_{\om,\bar{\om}}(T_n\ge m|\bar{T}_n< m)\pr_{\om,\bar{\om}}(\bar{T}_n< m)\\
&=\pr_{\om,\bar{\om}}(\bar{T}_n\ge m)+\pr_{\om,\bar{\om}}(T_n\ge m|\bar{T}_n< m)\pr_{\om,\bar{\om}}(\bar{T}_n< m)\\
&\geq\pr_{\om,\bar{\om}}(\bar{T}_n\ge m), \eae

$~$\\
thus \bae \ev^0_{\bar{\omega}}[T_n]\le\ev^0_{\omega}[T_n]. \eae\end{comment}
\end{proof}

We turn now to prove the main theorem.

\begin{proof}[Proof of Theorem \ref{Thm:Main_inequality}]
Let $\ep>0$, and let $\om$ be a
$(p,\lam)$ environment. Since $\om$ is a $(p,\lam)$ environment there exists
$M\in\BN$ such that for every $N\geq M$ we
have
\begin{equation} \label{main_proof_good_density}
\frac{\#\{x\in[N] : \om(x)=p\}}{N}\leq \lam+\ep.
\end{equation}

$~$\\
For $N\geq M$ we define a new environment $\bar{\om}$ as follows :
\[
\bar{\om}(x)=\begin{cases}
\omega(x) & \,\, N\nmid x\\
1 & \,\, N\mid x\end{cases}
,\]
where $N\mid x$ is a shorthand for $N$ divides $x$.

Let $\bar{T}_n$ be the same hitting time distributed according to
the environment $\bar{\om}$. Since for every $x\in\BZ$ we have
$\om(x)\leq \bar{\om}(x)$ it follows, using Lemma
\ref{lem:cutenv}, that
\bae
\frac{T_{nN}}{nN}=\frac{1}{n}\sum_{k=1}^{n}\frac{{T}_{kN}-{T}_{(k-1)N}}{N}{\ge}
\frac{1}{n}\sum_{k=1}^{n}\frac{\bar{T}_{kN}-\bar{T}_{(k-1)N}}{N}\quad\prob_{\om,\bar{\om}}~\text{a.s.}
\eae

By the strong Markov property the random variables
$\{\bar{T}_{kN}-\bar{T}_{(k-1)N}\}_{k=1}^{\infty}$ are
independent (but for general environment not identically
distributed) and we wish to apply Kolmogorov's strong law of large numbers.\\
For $n\in\BN$ denote by $S_n$ the first hitting time of $n$ by a
symmetric simple random walk with reflection at the origin and starting at $0$. By Lemma \ref{lem:cutenv}, for every
$k\in\BN$ we have that $S_N$ stochastically dominates
$\bar{T}_{kN}-\bar{T}_{(k-1)N}$, and therefore
\[
\ev_{\bar{\om}}^0\left[\frac{\bar{T}_{kN}-\bar{T}_{(k-1)N}}{N}\right]\le\ev^0\left[\frac{S_N}{N}\right]=N<\infty,
\]
and
\[
\ev_{\bar{\om}}^0\left[\left(\frac{\bar{T}_{kN}-\bar{T}_{(k-1)N}}{N}\right)^2\right]
\le\ev^0\left[\left(\frac{S_N}{N}\right)^2\right]\le\frac{5}{3}N^2<\infty.
\]
The last relations are derived from the optional stopping theorem (see \cite{morters2010brownian} Theorem 12.20) and the fact that for a symmetric simple random walk $Y_n$,
\bae\nonumber
&Y_n^2-n,\\&
Y_n^4-6nY_n^2+3n^2+2n
\eae
are martingales. It therefore
follows by Kolmogorov's strong law of large numbers that
\begin{equation}\label{SLLN}
\lim_{n\rightarrow\infty}\frac{1}{n}\sum_{k=1}^n\frac{\bar{T}_{kN}-\bar{T}_{(k-1)N}}{N}
-\ev\left[\frac{1}{n}\sum_{k=1}^n\frac{\bar{T}_{kN}-\bar{T}_{(k-1)N}}{N}\right]=0,~\prob_{\bar{\om}}^0
\text{ a.s}.
\end{equation}

For $1\leq k\leq n$ we define $\lam_k$ to be the $p$'s density in
the interval $[(k-1)N,kN)$, i.e.
\[
\lambda_k=\frac{1}{N}\sum_{x=(k-1)N}^{kN-1}\ind_{\om(x)=p}.
\]
By \eqref{main_proof_good_density} we have
\begin{equation} \label{avarage_density}
\frac{1}{n}\sum_{k=1}^{n}\lambda_k=\frac{1}{n}\sum_{k=1}^{n}\frac{1}{N}\sum_{x=(k-1)N}^{kN-1}\ind_{\om(x)=p}
=\frac{\#\{x\in[nN-1] ~:~ \om(x)=p\}}{nN}< \lam+\ep.
\end{equation}
Notice that each of the segments $[(k-1)N,kN-1]$ of $\bar{\om}$ is a
$(N,p,\lam_kN)$ environment. It therefore follows by Proposition
\ref{prop:finite_case_prop} that \bae \frac{1}{n}\sum_{k=1}^n
\ev^0_{\bar{\om}}\left[\frac{\bar{T}_{kN}-\bar{T}_{(k-1)N}}{N}\right]
&\ge\frac{1}{n}\sum_{k
=1}^n\frac{1}{(2p-1)\cdot\lambda_k+\frac{1}{N}} \geq
\frac{n}{\sum_{k=1}^n(2p-1)\cdot\lambda_k+\frac{1}{N}}\\&\geq
\frac{1}{(2p-1)(\lambda+\epsilon)+\frac{1}{nN}}, \eae where the
second inequality follows from the inequality of arithmetic and harmonic means
and the third is by \eqref{avarage_density}. Thus,
\bae\liminf_{n\rightarrow\infty}\frac{T_n}{n}&\overset{\prob_{\om,\bar{\om}}
\text{ -a.s}}{\ge}
\liminf_{n\rightarrow\infty}\frac{1}{n}\sum_{k=1}^n\frac{\bar{T}_{kN}-\bar{T}_{(k-1)N}}{N}
=\liminf_{n\rightarrow\infty}\frac{1}{n}\sum_{k=1}^n\ev\left[\frac{\bar{T}_{kN}-\bar{T}_{(k-1)N}}{N}\right]\\
&\ge\lim_{n\rightarrow\infty}\frac{1}{(2p-1)(\lambda+\epsilon)+\frac{1}{nN}}=\frac{1}{(2p-1)(\lambda+\epsilon)}.
\eae
Since $\epsilon>0$ was arbitrary we obtain
\[
\liminf_{n\rightarrow\infty}\frac{T_n}{n}\ge\frac{1}{(2p-1)\lambda}.
\]
with the notation $\frac{1}{0}=\infty$. Now for $n\in\BN$ let $k_n$ be the unique random integers such that
\[T_{k_n}\le n<T_{k_n+1} .\]
Since $X_n<k_n+1$ we get that \bae
\frac{X_n}{n}-\frac{1}{n}\le\frac{k_n}{n}.
\eae
Thus
\bae
\limsup_{n\rightarrow\infty}\frac{X_n}{n}\le\limsup_{n\rightarrow\infty}\frac{k_n}{n}\le\limsup_{n\rightarrow\infty}\frac{k_n}{T_{k_n}}\le\limsup_{n\rightarrow\infty}\frac{n}{T_n}
=\frac{1}{\liminf_{n\rightarrow\infty}\frac{T_n}{n}}\le(2p-1)\lambda.
\eae
\end{proof}

%*****************************************************************************************************************************************************
%******************************************** Section 4 - Corollaries regarding RWRE *****************************************************************
%*****************************************************************************************************************************************************

\section{Application of the result to RWRE}

We turn now to prove corollary \ref{Cor:RWRE_cor}.

\begin{proof}[Proof of Corollary \ref{Cor:RWRE_cor}]
By ergodicity we obtain
\bae\label{eq:lamsum}
\lim_{n\rightarrow\infty}\frac{1}{n}\sum_{x=0}^{n}\ind_{\om(x)=p}=\lam, \quad P\text{ a.s.}
\eae
Define two random variables:
\bae
\bar{S}&=\sum_{i=1}^\infty\frac{1}{\omega{(-i)}}\prod_{j=0}^{i-1}\rho(-j)+\frac{1}{\omega(0)}\\
\bar{F}&=\sum_{i=1}^\infty\frac{1}{(1-\omega{(i)})}\prod_{j=0}^{i-1}\rho(j)^{-1}+\frac{1}{(1-\omega(0))},
\eae where $\rho(j)=\frac{1-\om(j)}{\om(j)}$. Since $\forall
i\in\BN$, $\omega(i)\ge1/2$, it follows that $\rho(i)^{-1}\ge1$ and
therefore $\bar{F}=\infty$, $P$ a.s. By Lemmas 2.1.9 and 2.1.12 of
\cite{zeitouni2001lecture}, if $E[\bar{S}]=\infty$ and
$E[\bar{F}]=\infty$ then
$\lim_{n\rightarrow\infty}\frac{X_n}{n}=0,\quad\prob_\om^0$  a.s for
$P$ almost every $\om$, and if $E[\bar{S}]<\infty$ then
$\lim_{n\rightarrow\infty}\frac{X_n}{n}=\frac{1}{\BE[T_1]}$ and
$\BE[T_1]<\infty$, where $\BE$ is the annealed expectation. By
\cite{zeitouni2001lecture} Lemmas 2.1.10 and 2.1.12 we have,
$\lim_{n\rightarrow\infty}\frac{T_n}{n}\overset{a.s}{=}\BE[T_1]$,
$\{T_{i+1}-T_i\}_{i=0}^{\infty}$ is a stationary and ergodic
sequence and
$\lim_{n\rightarrow\infty}\BE\left[\frac{T_n}{n}\right]=\lim_{n\rightarrow\infty}\frac{1}{n}\sum_{i=0}^{n-1}\BE\left[T_{i+1}-T_i\right]=\BE[T_1]$.
Let $\lam_n$ be the density of drifts in the interval $[0,n-1]$. By
Proposition \ref{prop:finite_case_prop} and \eqref{eq:lamsum} we
have
\[\BE[T_1]=\lim_{n\rightarrow\infty}\BE\left[\frac{T_n}{n}\right]\geq \lim_{n\rightarrow\infty} E\left[\frac{n}{(2p-1)\lam_nn-1}\right] =\frac{1}{(2p-1)\lam},\] and therefore
\[
\lim_{n\rightarrow\infty}\frac{X_n}{n}\le(2p-1)\lam ,\]
for $P$ almost every environment $\om$ and $\prob^x_\om$ almost every
random walk in it.
\end{proof}

It was pointed to the authors by Ofer Zeitouni that a trivial bound
to the speed exists to RWRE and one needs to check that this bound
is not better than the bound we get in Corollary \ref{Cor:RWRE_cor}. Note that the trivial bound is by no means tight.
By \cite{zeitouni2001lecture}
$\lim_{n\rightarrow\infty}\frac{X_n}{n}=\frac{1}{E\left[\bar{S}\right]}.$
Thus in order to get an upper bound on the speed a lower bound on
$E[\bar{S}]$ is needed. \bae
E[\bar{S}]&=\sum_{i=1}^\infty E\left[\frac{1}{\omega_{-i}}\prod_{j=0}^{i-1}\rho_{-j}\right]+E\left[\frac{1}{\omega_0}\right]\\
&=\sum_{i=1}^\infty E\left[e^{\sum_{j=0}^{i-1}\log\rho_{-j}-\log\omega_{-i}}\right]+E\left[\frac{1}{\omega_0}\right]\\
&\ge\sum_{i=1}^\infty e^{\sum_{j=0}^{i-1}E[\log\rho_{-j}]-E[\log\omega_{-i}]}+E\left[\frac{1}{\omega_0}\right]\\
&=p^{-\lambda}2^{1-\lambda}\sum_{i=1}^{\infty}e^{i\lambda\log\frac{1-p}{p}}+\frac{\lambda}{p}+(1-\lambda)2\\
&=p^{-\lambda}2^{1-\lambda}\frac{\left(\frac{1-p}{p}\right)^\lambda}{1-\left(\frac{1-p}{p}\right)^\lambda}+\frac{\lambda}{p}+(1-\lambda)2,
\eae where the inequality is Jensen's. Denote by
$S(p,\lambda)=p^{-\lambda}2^{1-\lambda}\frac{\left(\frac{1-p}{p}\right)^\lambda}{1-\left(\frac{1-p}{p}\right)^\lambda}+\frac{\lambda}{p}+(1-\lambda)2$.
In figure \ref{fig:NFSRWRE} we drew the difference between the bound
from Corollary \ref{Cor:RWRE_cor} and $S(p,\lambda)$. One can see
that for a large region of $p$ and $\lambda$ the bound archived in
Corollary \ref{Cor:RWRE_cor} is tighter.

\begin{figure}[H]
\begin{center}
\includegraphics[width=0.5\textwidth]{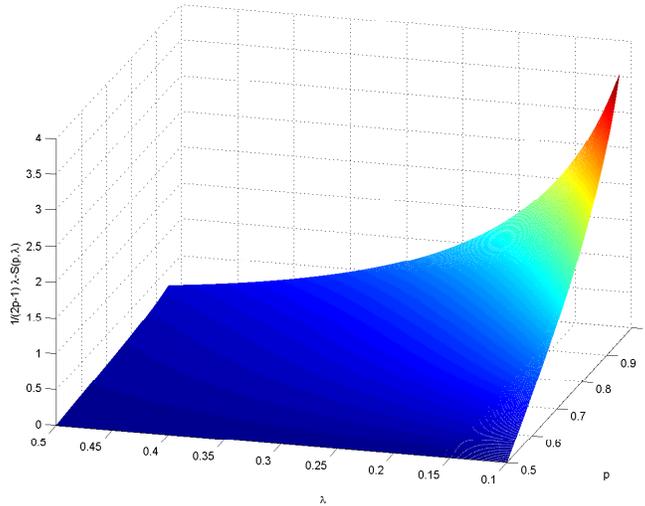}
\caption{Difference of bounds\label{fig:NFSRWRE}}
\end{center}
\end{figure}
\begin{rem}
Note that in the case of i.i.d RWRE an explicit value of the speed can be calculated and is equal to $\frac{(2p-1)\lam}{\lam+2p(1-\lam)}$ which is always smaller than $(2p-1)\lam$.
\end{rem}

%*****************************************************************************************************************************************************
%******************************************** Section 5 - Tightness of result ************************************************************************
%*****************************************************************************************************************************************************

\section{Tightness of the result}\label{sec:tight}
In this section we discuss  tightness of the result in the sense: Is there a $(p,\lambda)$ environment $\omega$, such that a
random walk $\{X_n\}$ in $\omega$ has the property
\[
\limsup_{n\rightarrow\infty}\frac{X_n}{n}=\lim_{n\rightarrow\infty}\frac{X_n}{n}=(2p-1)\lambda,\quad \prob_\omega\text{ a.s}
.\]
\subsection{Positive tightness}
\begin{prop1}$\textbf{{\textsl{\ref{prop:lameqm}.}}}$
Let $m\in\BN$ and assume $\lambda=\frac{1}{m}$. Let $\omega$ be an
environment defined by
\bae
\om(x)=\left\{\begin{array}{ll}
p\quad x\in m\BZ\\
\frac{1}{2}\quad\text{otherwise}
\end{array}\right.
,\eae
then a random walk $\{X_n\}$ in $\omega$ has the property
\[
\limsup_{n\rightarrow\infty}\frac{X_n}{n}=\lim_{n\rightarrow\infty}\frac{X_n}{n}=(2p-1)\lambda,\quad\prob_\omega\text{ a.s}
.\]
\end{prop1}
\begin{proof}
We prove this proposition by a direct calculation of the speed. \bae
\tbP_\om^0(T_m<T_{-m})&=p\left[\tbP_\om^1(T_m<T_0)+\tbP_\om^1(T_m>T_0)\tbP_\om^0(T_m<T_{-m})\right]\\
&+(1-p)\tbP_\om^{-1}(T_0<T_{-m})\tbP_\om^0(T_m<T_{-m}), \eae but
$\tbP_\om^1(T_m<T_0)=\tbP_\om^{-1}(T_0>T_{-m})=\frac{1}{m}$, thus
\[\tbP_\om^0(T_m<T_{-m})=p.\] Now $\ev_\om^0[T_m\wedge T_{-m}]=m^2$.
Consider $\omega$ as an environment with a constant drift $p$, such
that every jump takes on average $m^2$ steps. The speed of a random walk in an environment with 
constant drift $p$ at any point is $(2p-1)$. Thus the speed for $\omega$
is $(2p-1)\frac{m}{m^2}=(2p-1)\lambda$.
\end{proof}
We turn to prove a general tightness result.
\begin{prop1}\textsl{\bf \ref{prop:lamthree}.}
For every $\frac{1}{2}< p\le 1$ and $0<\lam\le1$, there exists a $(p,\lam)$ environment
$\omega$, and a constant $D(p)>0$ such that
\[
\lim_{n\rightarrow\infty}\frac{X_n}{n}\ge(2p-1)\lam-D(p)\lam^3,\quad\prob_\omega\text{ a.s}.
\]
\end{prop1}
\begin{proof}
First assume that $\lam\in\BQ$. We define the
environment $\omega$ by the positions $\{l_i\}$ of non-zero drifts on $\BZ$. For every $i\in\BZ$ let
$l_i=\left\lceil\frac{1}{\lam}\left(i+\frac{1-p}{2p-1}\right)\right\rceil$.
Note that since $\lam\le 1$ all the drift positions are distinct,
and  $\omega$ is indeed a $(p,\lam)$ environment. For every $N\in\BN$
we denote by $k=k(N)$, the number of drifts in the interval
$[0,N)$. Note that $\lim_{N\rightarrow\infty} \frac{k(N)}{N}=\lam.$ For a given $k\in\BN$ we denote
$b_{[k]}=(b_1,\ldots,b_k)$, where
\[
b_i=\frac{(2p-1)i+(1-p)}{(2p-1)k+1}N .\] By the Cauchy-Schwarz
inequality
\bae\langle H_k(l_{[k]}-b_{[k]}),(l_{[k]}-b_{[k]})\rangle&\le\| H_k(l_{[k]}-b_{[k]})\|_2\|(l_{[k]}-b_{[k]})\|_2\\
&\le\|H_k\|_2\|(l_{[k]}-b_{[k]})\|_2^2. \eae By Proposition
\ref{prop:matrixnormbound}, there exists a $C(p)$ such that
$\|H_k\|_2\le C(p)$ for every $k\in\BN$. Thus
\[
\lim_{N\rightarrow\infty}\frac{1}{N}\langle
H_k(l_{[k]}-b_{[k]}),(l_{[k]}-b_{[k]})\rangle\le
C(p)\lam\lim_{k\rightarrow\infty}\frac{1}{k}\|(l_{[k]}-b_{[k]})\|_2^2
.\] Notice that there exists a constant $C'$ (does not depend on $\lam$ or any other parameter) such that for $k$ large
enough $\|(l_{[k]}-b_{[k]})\|_\infty<C'$, thus
$\|(l_{[k]}-b_{[k]})\|_2\le C'\sqrt{k}$, therefore
\bae\label{eq:innerbound}
\lim_{N\rightarrow\infty}\frac{1}{N}\langle
H_k(l_{[k]}-b_{[k]}),(l_{[k]}-b_{[k]})\rangle\le C(p)C'\lam .\eae

Next we prove that for the environment $\omega$, the limit
$\lim_{n\rightarrow\infty}\frac{X_n}{n}$ exists. From Lemma 2.1.17
of \cite{zeitouni2001lecture} it is enough to show the limit
$\lim_{n\rightarrow\infty}\frac{T_n}{n}$ exists. Since $\lam$ is
rational there exists some $n_0\in\BN$ such that $\frac{n_0}{\lam}$ is an integer and therefore for every $i\in\BN$, $l_{n_0i}=\left\lceil i\frac{n_0}{\lam}+\frac{1-p}{\lam(2p-1)}\right\rceil= i\frac{n_0}{\lam}+\left\lceil\frac{1-p}{\lam(2p-1)}\right\rceil$. It follows that $\om$ is $n_0$ periodic
and therefore $\left\{T_{kn}-T_{(k-1)n}\right\}_{k=2}^\infty$ are
i.i.d. From the law of large numbers (Note that the first random variable in the sum is bounded and therefore negligible)
\[
\lim_{k\rightarrow\infty}\frac{1}{k}\sum_{j=1}^k\left[T_{j{n_0}}-T_{(j-1){n_0}}\right]=\ev[T_{n_0}],\quad\prob_\om\text{
a.s} .\] Now define $m_N$ to be the minimal integer such that
$m_N\cdot n_{0}\le N<(m_N+1)n_0$, then $\frac{T_{m_N\cdot
n_{0}}}{(m_N+1)n_{0}}\le\frac{T_N}{N}\le\frac{T_{(m_N+1)n_{0}}}{m_N\cdot
n_{0}}$, thus \bae\label{eq:limextn}
\lim_{N\rightarrow\infty}\frac{T_N}{N}=\lim_{k\rightarrow\infty}\frac{T_{kn_{0}}}{kn_{0}}=\frac{1}{n_{0}}\ev[T_{n_{0}}],\quad\prob_\om\text{
a.s} .\eae Note that if $\lim_{n\rightarrow\infty}\frac{X_n}{n}$
exists, it is the same for the environment $\omega$ and
$\bar{\omega}$ ($\omega$ with reflection at the origin), since the
random walk almost surely spends only a finite time  left of the
origin. By \eqref{eq:limextn} the limit,
$\lim_{N\rightarrow\infty}\frac{X_N}{N}$ exists and by Lemma 2.1.1
of \cite{zeitouni2001lecture},
$\lim_{N\rightarrow\infty}\frac{X_N}{N}=\lim_{N\rightarrow\infty}\frac{N}{\ev[T_N]}$.
We obtain from \eqref{eq:innerbound} and Proposition
\ref{prop:finite_case_prop} \bae
\label{density_speed_loewr_estimate}
\lim_{N\rightarrow\infty}\frac{X_N}{N}=\lim_{N\rightarrow\infty}\frac{N}{\ev[T_N]}\ge\frac{1}{\frac{1}{(2p-1)\lam}+C'C(p)\lam}\ge(2p-1)\lam
-C'C(p)(2p-1)^2\lam^3. \eae

Now For $\lam\notin\BQ$, let $\epsilon>0$ and let $0<\lam'<1$ be a rational number such
that $\lam-\ep<\lam'<\lam$. Define $\om$ to be the environment defined above for the rational density $\lam'$. Notice that $\om$ is a $(p,\lam')$ environment but
also a $(p,\lam)$ environment since $\lam'<\lam$. It follows from
\eqref{density_speed_loewr_estimate} that \bae
\lim_{N\rightarrow\infty}\frac{X_N}{N} &\geq (2p-1)\lam'
-C'C(p)(2p-1)^2\lam'^3 \\
&=(2p-1)\lam-C'C(p)(2p-1)^2\lam^3-(2p-1)(\lam-\lam')+C'C(p)(2p-1)^2(\lam^3-\lam'^3)\\
&\ge(2p-1)\lam-C'C(p)(2p-1)^2\lam^3-\left[(2p-1)+3C'C(p)(2p-1)^2\right]\ep,\eae taking $\epsilon$ small enough we obtain the result for some constant $D(p)>0$.

\end{proof}
\begin{rem}
Notice that for a rational $\lam$, by taking a uniform shift on the
environment $\om$ (shift right by an integer number uniformly chosen
between 0 and the period of $\om$), one gets an ergodic environment.
Thus from Proposition \ref{prop:lamthree} we get an example of a
RWRE which achieves the speed bound up to $\lam^3$.
\end{rem}
%*********************************************************************************************************************************************************
%*********************************************************************************************************************************************************
\subsection{Lack of tightness}
We now present an example where no environment achieves the speed
bound. This section also shows the bound in Proposition
\ref{prop:lamthree} can't be improved asymptotically.

Let $p=1$, $\lam=\frac{n}{mn+l}$ and assume $1<n\in\BN$, $l\in\BN$,
$0<l<n$ and $m\in\BN$. Note that the assumptions hold for every
rational number not of the form $\frac{1}{m}$ for some $m\in\BN,$ and
that $\left\lceil \frac{1}{\lam}\right\rceil=m+1$, $\left\lfloor
\frac{1}{\lam}\right\rfloor=m$.

We prove the following proposition :

\begin{prop1}\textsl{\bf \ref{prop_Lack_ot_tightness}.} Let $\lam>0$ be of the form $\lam=\frac{n}{mn+l}$ with the same
conditions as above. There exists a constant $D=D(n)>0$ such that
for every $(1,\lam)$ environment $\om$ we have
$\limsup_{N\rightarrow\infty}\frac{X_N}{N}\leq \lam-D\lam^3$.
\end{prop1}

We start by defining a family of environments $\Upsilon$ by the
following criteria : $\nu\in\Upsilon$ if the interval length between
two consecutive drifts in $\nu$ is either of length $m$ or of length
$m+1$ and there exists a limit to the density of drifts which equals $\lam$ . Under this assumption we can calculate the density of the two
different lengths. Denote by $\rho_i$ the density of intervals of
length $i$. Then under the assumptions we have
$\rho_m+\rho_{m+1}=\lam$ and $m \rho_m+(m+1)\rho_{m+1}=1$,
therefore \bae
&\rho_m =(m+1)\lam-1 \\
&\rho_{m+1}=1-m\lam.
\eae

\begin{prop} \label{prop:limit_proposition_Upsilon environment}
For every environment $\nu\in\Upsilon$ the limit $\lim_{n\rightarrow\infty}\frac{X_n}{n}$ exists, and
\[
\lim_{n\rightarrow\infty}\frac{X_n}{n}=\frac{1}{2m+1-m(m+1)\lam}
.\]
\end{prop}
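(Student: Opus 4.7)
The key structural observation I would exploit is that $p=1$ makes every drift point $l_i$ a one-way gate: upon reaching $l_i$, the walker is deterministically pushed to $l_i+1$, so it never visits any point strictly to the left of $l_i$ again. Enumerate the drifts as $\cdots<l_{-1}<l_0<l_1<\cdots$, let $\tau_i$ be the first hitting time of $l_i$, and set $d_i=l_{i+1}-l_i\in\{m,m+1\}$. By the strong Markov property applied at $\tau_i$, the increments $\{\tau_{i+1}-\tau_i\}_{i\ge 0}$ are independent, and each has the law of the first hitting time of $d_i$ for a symmetric simple random walk on $\{0,1,\ldots,d_i\}$ started at $0$ with $0$ acting as a reflecting barrier (since landing on $l_i$ bounces the walker back to $l_i+1$). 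Solving the standard recurrence (the $k=0$ analogue of the computation in Proposition~\ref{prop:finite_case_prop}) gives $\ev[\tau_{i+1}-\tau_i]=d_i^2$, and since $d_i\le m+1$ is uniformly bounded, the variances are bounded by a constant $K=K(m)$.

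Next I would apply Kolmogorov's strong law of large numbers to the independent, variance-bounded sequence $\{(\tau_{i+1}-\tau_i)-d_i^2\}_{i\ge 0}$, giving
\[
\frac{1}{n}\sum_{i=0}^{n-1}\bigl[(\tau_{i+1}-\tau_i)-d_i^2\bigr]\longrightarrow 0\quad\text{a.s.}
\]
The density assumption defining $\Upsilon$ together with $\rho_m=(m+1)\lam-1$ and $\rho_{m+1}=1-m\lam$ yields
\[
\frac{1}{n}\sum_{i=0}^{n-1}d_i^2\longrightarrow \frac{\rho_m m^2+\rho_{m+1}(m+1)^2}{\lam}=\frac{2m+1}{\lam}-m(m+1).
\]
Combining these with $l_n/n\to 1/\lam$ (again from the density assumption) gives
\[
\frac{X_{\tau_n}}{\tau_n}=\frac{l_n}{\tau_n}\longrightarrow \frac{1}{2m+1-m(m+1)\lam}\quad\text{a.s.}
\]

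Finally, I would pass from the subsequential limit to the full limit by the standard sandwich. For any $k$ with $\tau_n\le k<\tau_{n+1}$, one has $l_n\le X_k<l_{n+1}$, hence
\[
\frac{l_n}{\tau_{n+1}}\le\frac{X_k}{k}\le\frac{l_{n+1}}{\tau_n},
\]
and both bounds tend to the same limit. The main technical obstacle I anticipate is not any single step but rather bookkeeping around the starting point: if $X_0$ is not itself a drift, the walker may wander briefly to the left of $X_0$ before hitting its first drift, but this initial excursion is a.s.\ finite (there are drifts on both sides at bounded distance, and $p=1$ guarantees positive probability of being absorbed at them), so it contributes only an additive $O(1)$ shift to the $\tau_n$'s, which is negligible after dividing by $n$. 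A secondary obstacle is verifying the uniform variance bound; this can be done either by an explicit second-moment calculation along the lines of the $\ev^0[(S_N/N)^2]\le \tfrac{5}{3}N^2$ estimate already used in the proof of Theorem~\ref{Thm:Main_inequality}, or by stochastic domination by the (bounded in distribution, since $m$ is fixed) hitting time $S_{m+1}$.
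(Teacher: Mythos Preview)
Your proposal is correct and follows essentially the same approach as the paper: decompose the trajectory into independent pieces corresponding to the gaps between consecutive drifts (each behaving like a reflected simple random walk on an interval of length $m$ or $m+1$), apply a law of large numbers using $\ev[S_j]=j^2$, and then convert the hitting-time limit into a speed limit. The only cosmetic differences are that the paper groups the increments by interval length into two i.i.d.\ families and invokes the ordinary SLLN (whereas you keep them in order and use Kolmogorov's SLLN with a variance bound), and the paper appeals to Lemma~2.1.17 of \cite{zeitouni2001lecture} for the passage from $T_N/N$ to $X_N/N$ rather than writing out the sandwich explicitly.
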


\begin{proof}
Since $p=1$ we can write $T_N$ as
\[
\frac{T_N}{N}=\frac{1}{N}\sum_{i=1}^{r_m(N)}S_i(m)+\frac{1}{N}\sum_{i=1}^{r_{m+1}(N)}S_i(m+1)+\frac{1}{N}S,
\]
where $r_m(N),r_{m+1}(N)$ are the number of intervals of length $m$ and $m+1$ up to time $N$ respectively, $\{S_i(m)\}_i$ and $\{S_i(m+1)\}_i$ are two sequences of i.i.d random variables, where $S_i(j)$ is distributed as the first hitting time of $j$ by a simple random walk reflected at zero. $S$ is the first hitting time to the point $N-r_m(N) m-r_{m+1}(N) (m+1)$ of a simple random walk reflected at the origin, independently of both $\{S_i(m)\}_i$ and $\{S_i(m+1)\}_i$. Since $S$ is finite almost surely and since $\lim_{N\rightarrow\infty}\frac{r_j(N)}{N}=\rho_j$ for $j\in\{m,m+1\}$ we get by the strong law of large numbers that
\bae
\lim_{N\rightarrow\infty}\frac{T_N}{N}&=\rho_m\cdot \ev[S_1(m)]+\rho_{m+1}\ev[S_1(m+1)]\\
&=((m+1)\lam-1) m^2+(1-m\lam) (m+1)^2\\
&=2m+1-m(m+1)\lam.
\eae
Following the same argument as in Lemma 2.1.17 in \cite{zeitouni2001lecture} we get that $\lim_{N\rightarrow\infty}\frac{X_N}{N}$ exists and
\[
\lim_{N\rightarrow\infty}\frac{X_N}{N}=\frac{1}{\lim_{N\rightarrow\infty}\frac{T_N}{N}}=\frac{1}{2m+1-m(m+1)\lam}.
\]
\end{proof}

\begin{prop} \label{prop:environment_to_environment}
For every $(1,\lam)$ environment $\om$ there exists an environment
$\nu\in\Upsilon$ such that
\[
\liminf_{N\rightarrow\infty}\frac{T_N}{N}\geq \lim_{N\rightarrow\infty}\frac{T^\nu_N}{N},
\]
where $T^\nu_N$ are the hitting times in the environment $\nu$ and $T_N$ are the ones in the environment $\om$.
\end{prop}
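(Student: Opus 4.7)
The plan is to show that for every $(1,\lam)$ environment $\om$, $\liminf_{N\to\infty} T_N/N \geq c^*:=2m+1-m(m+1)\lam$ almost surely. Since by Proposition \ref{prop:limit_proposition_Upsilon environment} every $\nu\in\Upsilon$ satisfies $\lim T^\nu_N/N=c^*$, we may then take for $\nu$ any explicit periodic element of $\Upsilon$, e.g.\ the environment of period $mn+l$ whose fundamental block consists of $n-l$ gaps of length $m$ followed by $l$ gaps of length $m+1$ (so density is $n/(mn+l)=\lam$).

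First I would decompose $T_N$ according to the drifts of $\om$. Let $0<l_1<l_2<\ldots$ enumerate the drifts to the right of the origin, set $L_i:=l_{i+1}-l_i$, and let $k(N)$ count the drifts in $[0,N]$. Because $p=1$, a drift acts as a one-way gate: once the walker is at $l_i$ it is deterministically pushed to $l_i+1$ and any subsequent attempted return to $l_i$ is immediately reversed, so the traversal time $W_i$ from $l_i$ to $l_{i+1}$ is distributed as the hitting time of $L_i$ by a simple random walk reflected at $0$. The $\{W_i\}$ are independent by the strong Markov property, and the martingale identities recalled in the proof of Theorem \ref{Thm:Main_inequality} give $\ev[W_i]=L_i^2$ and $\ev[W_i^2]\leq \tfrac{5}{3}L_i^4$. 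Writing $T_N$ as the sum of these traversal times plus small boundary corrections from the gaps containing $0$ and $N$ gives $\ev[T_N]=\sum_{i=1}^{k(N)-1}L_i^2+O(L_{k(N)}^2)$.

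Next I would perform a deterministic constrained minimization: for positive integers $L_1,\ldots,L_k$ with $\sum_i L_i\leq N$ and $k/N\to\lam$, strict convexity of $x\mapsto x^2$ together with the integer constraint forces any minimizer of $\sum_i L_i^2$ to satisfy $|L_i-L_j|\leq 1$ for all $i,j$, i.e.\ every $L_i\in\{m,m+1\}$ (if some pair had $L_i-L_j\geq 2$, replacing it by $L_i-1,L_j+1$ would strictly decrease the sum). Writing $N=mk+r$ yields the minimum value $km^2+r(2m+1)$, which after dividing by $N$ and passing to the limit (using $k/N\to\lam$ and $r/N\to 1-m\lam$) converges to $\lam m^2+(1-m\lam)(2m+1)=c^*$. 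Hence $\liminf_N \ev[T_N]/N\geq c^*$. To upgrade this to the a.s.\ statement on $T_N$ itself, I apply Kolmogorov's strong law of large numbers\textemdash in the same form used in the proof of Theorem \ref{Thm:Main_inequality}\textemdash to the independent mean-zero sequence $\{W_i-L_i^2\}$, obtaining $(T_N-\ev[T_N])/N\to 0$ a.s., and therefore $\liminf T_N/N\geq c^*$ a.s.

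I expect the main obstacle to be the SLLN step, since the gap lengths $L_i$ of a general $\om$ need not be uniformly bounded in $N$, and then $\operatorname{Var}(W_i)\lesssim L_i^4$ may be large. I would address this via a dichotomy: if $\sum_{i\leq k(N)} L_i^2/N$ stays bounded along a subsequence of $N$'s, then $\max_{i\leq k(N)}L_i=O(\sqrt N)$ and $\sum_{i\leq k(N)} L_i^4 = O(N^2)$ there, and Kolmogorov's variance condition (in a rescaled form via Kronecker's lemma) can be verified, with convergence upgraded to almost-sure using monotonicity $T_{N+1}\geq T_N$ and Borel\textendash Cantelli along a geometric subsequence; if instead $\sum L_i^2/N\to\infty$ along a subsequence, then the walker is forced to traverse a gap large enough to make $T_N/N\to\infty$ almost surely along that subsequence (since the hitting time of a large $L_i$ concentrates around $L_i^2$), so $\liminf T_N/N=\infty$ there and the bound holds trivially. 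The convexity step is elementary, so the whole proof reduces to this careful SLLN / concentration argument combined with Proposition \ref{prop:limit_proposition_Upsilon environment}.
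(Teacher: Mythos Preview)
Your overall strategy shares the paper's key idea: the convexity argument that, among $k$ positive integers summing to $N$, the sum of squares is minimized when all terms lie in $\{m,m+1\}$, giving $\ev^0_\om[T_N]/N\ge c^*$ asymptotically. Where you diverge from the paper is in the passage from this expectation bound to the almost-sure bound on $T_N/N$.

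The paper does \emph{not} attempt to run an SLLN directly on the traversal times $W_i$ of the original environment $\om$. Instead it \emph{constructs} $\nu$ from $\om$ by an iterative redistribution: on successively longer blocks $[N_{j-1},N_j]$ it rearranges the drifts so that every gap has length $m$ or $m+1$, using exactly your convexity observation to ensure that each such rearrangement only decreases $\ev[T_{N_j}]$. The resulting $\nu$ has uniformly bounded gaps $L_i\le m+1$, so Kolmogorov's SLLN applies trivially in $\nu$, and the comparison of $\om$ with $\nu$ is carried out at the level of expectations, not of sample paths. This is precisely how the paper sidesteps the obstacle you identify.

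Your proposed dichotomy, by contrast, does not close. In your Case A you obtain only $\sum_{i\le k(N)}L_i^4=O(N^2)$ along a subsequence, hence $\mathrm{Var}(T_N)=O(N^2)$; Chebyshev then gives a bound that is $O(1)$ rather than $o(1)$, so Borel--Cantelli cannot be invoked, and Kolmogorov's criterion $\sum_i L_i^4/i^2<\infty$ can genuinely fail (take $L_{2^j}\asymp 2^{j/2}$ and all other $L_i=m$: then $\sum L_i^2/N$ stays bounded while $\sum L_i^4/i^2=\infty$). In your Case B the statement that ``hitting times concentrate'' is too weak: for a single large gap $L$, the traversal time $W$ satisfies $W/L^2\Rightarrow \tau$ for a nondegenerate law $\tau$, so $W$ has multiplicative fluctuations of order $L^2$, and when one gap dominates there is no averaging to exploit. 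Moreover the two cases do not match the subsequence along which $T_N/N$ is small, which is what actually controls the $\liminf$. The clean fix is exactly the paper's: move the unboundedness into the construction of $\nu$ rather than into the SLLN.
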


\begin{proof}
Without loss of generality we assume that in the environment $\om$ the
limit \[\lim_{n\rightarrow\infty}\frac{1}{n}\sum_{x=1}^{n}\ind_{\om(x)=p}\]
exists and equals $\lam$. Indeed adding drifts to an environment only decreases the hitting times and we can always add drifts such that the limit of the density of the new environment exists. Let $\{\ep_j\}_{j\in\BN}$ a sequence of
positive numbers such that $\lim_{j\rightarrow\infty}\ep_j=0$.
Notice that for large enough $j\in\BN$ we have
$m\equiv\left\lfloor\frac{1}{\lam\pm\ep_j}\right\rfloor=\left\lfloor\frac{1}{\lam}\right\rfloor$.
and also
$m+1\equiv\left\lceil\frac{1}{\lam\pm\ep_j}\right\rceil=\left\lceil\frac{1}{\lam}\right\rceil$,
and so we assume this is true for every $j\in\BN$. We turn now to
define a sequence of environments $\zeta^n$ which will gradually
turn into an $\Upsilon$ environment. We assume without loss of
generality that $\om(0)=1$.

Fix some $N_1\in\BN$ large enough such that
for every $n\geq N_1-1 $ the density in the interval $[0,n]$ is between $\lam-\ep_1$ and $\lam+\ep_1$. In particular we have
\[
\lam-\ep_1<\lam_1\equiv\frac{1}{N_1-1}\sum_{i=1}^{N_1-1}\ind_{\om(x)=p}<\lam+\ep_1
\]
and also $\om(N_1)=p$. Note that by the assumptions
$\left\lfloor\frac{1}{\lam_1}\right\rfloor=m$ and
$\left\lceil\frac{1}{\lam_1}\right\rceil=m+1$.

Let us first analyze the environment in the interval $[0,N_1]$. We
denote by $k_1$ the number of drifts in the interval $(0,N_1)$, and
for $i\geq 1$ we denote by $r_i$ the number of intervals between two
consecutive drifts of length $i$ (We count in the interval length
the left drift but not the right one). We have the following
relations: \bae\label{eq:rieq}
N_1&=\sum_{i=0}^{N_1}ir_i\\
k_1+1&=\sum_{i=0}^{N_1}r_i\\
\ev[T_{N_1}]&=\sum_{i=0}^{N_1}i^2r_i.\\
\eae

Assume that there exist two indices $i<k$ such that $r_i,r_k>0$,
$k-i\ge2$ and either $k>m+1$ or $i<m$. By changing the location of
the drifts one can replace one interval of length $k$ and one of
length $i$ with intervals of length $k-1$ and $i+1$. By doing so one
gets a new environment with the same total length, same number
of drifts and with $\ev[T_N]$ smaller by $2(k-i-1)$. Since the
interval $[0,N_1]$ is finite, one can apply the last procedure only
finite number of times, and achieve a new environment $\zeta^1$.
Note that the environment $\zeta^1$ satisfy the following
conditions:
\begin{itemize}
\item For every $x\ge N_1$ we have $\om(x)=\zeta^1(x)$.
\item In the environment $\zeta^1$ inside the interval $[0,N_1]$
there are only intervals of length $m$ and $m+1$.
\end{itemize}
Indeed, the first claim is immediate from the fact the only changes
we made where in the interval $(0,N_1)$. For the second claim, note
that if in the end of the finite procedure one is left with an interval of length larger than $m+1$, then all the intervals are of length larger or equal to $m+1$ therefore the density is smaller than $\lam_1$. Same argument shows no intervals of length smaller than $m$ are left at the end of the procedure in the interval $(0,N_1)$.

Since each step of the procedure defining $\zeta^1$ decreased the value
of $\ev[T_{N_1}] $, and since $\om$ and $\zeta^1$ coincide for $x\ge N_1$ we get that $\ev[T_n^{(1)}]\leq \ev[T_n]$ for every $n\geq
N_1$, where $T_n^{(1)}$ is the first hitting time of  where $n$ in
the environment $\zeta^1$.

Let $N_2\in\BN$ be large enough so that $N_2>N_1$,
\[
\lam-\ep_2<\frac{1}{N_2-1}\sum_{i=1}^{N_2-1}\ind_{\om(x)=p}<\lam+\ep_2,
\]
and
\[
\lam-\ep_2<\lam_2\equiv\frac{1}{N_2-N_1-1}\sum_{i=N_1+1}^{N_2-1}\ind_{\om(x)=p}<\lam+\ep_2.
\]
Repeating the last procedure on the interval $[N_1,N_2]$ one can
define a new environment $\zeta^2$ such that:
\begin{itemize}
\item $\zeta^2(x)=\om(x)$ for every $x\geq N_2$.
\item In the interval $[0,N_1]$ the environments $\zeta^1$ and
$\zeta^2$ agree.
\item In the interval $[0,N_2]$ the length between
two consecutive drifts is either $m$ or $m+1$.
\item For every $n\geq N_2$ we have $\ev[T_n^{(2)}]\leq\ev[ T_n^{(1)}]\leq
\ev[T_n]$.
\item For every $n\geq N_1$ the density of the drifts in the
interval $(1,n)$ is between $\lam-2\ep_1$ and $\lam+2\ep_1$.
\end{itemize}

For the last point, notice that changing the order of intervals in $(N_1,N_2)$ does not change $\ev[T_{n}]$ for $n\ge N_2$. By rearranging the order of intervals we can ensure the last point is satisfied.

Repeating the last procedure and defining $\zeta^{j+1}$ from
$\zeta^{j}$ in the same way, we get a sequence of environments.
Finally define the environment $\nu$ by
\[
\nu(x)=\lim_{j\rightarrow\infty}\zeta^j(x), \quad \forall x\geq 0.
\]
This is well defined since for every $x\geq 0$ there exists
$j_0\in\BN$ such that for every $j\geq j_0$ the value of
$\zeta^j(x)$ is constant. From the definition of $\nu$ the
environment is indeed in the family $\Upsilon$.

Denote by $l_i$ the location of the $i^{th}$ drift to the right of
zero in the environment $\nu$ and $l_0=0$. In addition for every
$n\in\BN$ we define $k(n)$ to be the unique integer such that
$l_{k(n)}<n\leq l_{k(n)+1}$. It therefore follows that for every
$n\in\BN$ we have
\[
\frac{T_n}{n}=\frac{T_n-T_{l_{k(n)}}}{n}+\frac{1}{n}\sum_{i=1}^{k(n)}T_{l_{k(i)}}-T_{l_{k(i)-1}}
.\]

Since in the environment $\nu$ we only have intervals of length $m$
and $m+1$ we have
\[
\frac{T_n-T_{l_{k(n)}}}{n}\leq \frac{T_n-T_{n-m-1}}{n}
\]
and therefore
$\lim_{n\rightarrow\infty}\frac{T_n-T_{l_{k(n)}}}{n}=0$, $\prob_\om$ a.s.
Consequently we get that
\[
\liminf_{n\rightarrow\infty}\frac{T_n}{n}=\liminf_{n\rightarrow\infty}\frac{1}{n}\sum_{i=1}^{k(n)}T_{l_{k(i)}}-T_{l_{k(i)-1}},\quad\prob_\om\text{ a.s.}
\]
Since $k(n)$ as defined above equals to the number of drifts in the
interval $(0,n)$, we get from the construction of the environment
$\nu$ that $\lim_{n\rightarrow\infty}\frac{k(n)}{n}=\lam$. Thus we
get that
\[
\liminf_{n\rightarrow\infty}\frac{T_n}{n}=\liminf_{n\rightarrow\infty}\frac{\lam}{k(n)}\sum_{i=1}^{k(n)}T_{l_{k(i)}}-T_{l_{k(i)-1}},\quad\prob_\om\text{ a.s.}
\]
which by Kolmogorov strong law of large numbers equals to
\[
\liminf_{n\rightarrow\infty}\frac{\lam}{k(n)}\sum_{i=1}^{k(n)}\ev\left[T_{l_{k(i)}}-T_{l_{k(i)-1}}\right]
=\liminf_{n\rightarrow\infty}\frac{\lam}{k(n)}\ev\left[T_{l_{k(n)}}\right].
\]
Note that in order to apply Kolmogorov's LLN we used the fact that $l_i-l_{i-1}\le\left\lceil\frac{1}{\lam}\right\rceil$. Using again the construction of the environment $\nu$ we get that
the last expression is equal or bigger than
\[
\liminf_{n\rightarrow\infty}\frac{\lam}{k(n)}\ev\left[T^\nu_{l_{k(n)}}\right].
\]

Since in the environment $\nu$, $\lim_{n\rightarrow\infty}\frac{1}{n}\sum_{i=0}^{n-1}\ind_{\nu(i)=1}$
exists and equals $\lam$ we have that
$\lim_{n\rightarrow\infty}\frac{l_{k(n)}}{k(n)}=\frac{1}{\lam}$ and
so we get that
\begin{equation}
\begin{aligned}
\liminf_{n\rightarrow\infty}\frac{T_n}{n}
&\geq\liminf_{n\rightarrow\infty}\frac{\lam}{k(n)}\ev\left[T^\nu_{l_{k(n)}}\right]\\
&=\liminf_{n\rightarrow\infty}\frac{1}{l_{k(n)}}\ev\left[T^\nu_{l_{k(n)}}\right]\\
&\geq \liminf_{n\rightarrow\infty}\frac{\ev\left[T^\nu_n\right]}{n}.
\end{aligned}
\end{equation}

By Proposition \ref{prop:limit_proposition_Upsilon
environment} the limit
$\lim_{n\rightarrow\infty}\frac{T^{\nu}_n}{n}=\frac{1}{\lim_{n\rightarrow\infty}\frac{X^\nu_n}{n}}$
exists, which together with Fatou lemma's gives
\[\liminf_{n\rightarrow\infty}\frac{\ev\left[T^\nu_n\right]}{n}\geq
\ev\left[\liminf_{n\rightarrow\infty}\frac{T^\nu_n}{n}\right]
=
\ev\left[\lim_{n\rightarrow\infty}\frac{T^\nu_n}{n}\right]
.
\]
Finally using Lemma \ref{lem_as_constant},
$\lim_{n\rightarrow\infty}\frac{T^\nu_n}{n}$ is a $\prob_\nu$ almost
sure constant. Thus
\[
\ev\left[\lim_{n\rightarrow\infty}\frac{T^\nu_n}{n}\right]
=\lim_{n\rightarrow\infty}\frac{T^\nu_n}{n}
\]
and
\[
\liminf_{n\rightarrow\infty}\frac{T_n}{n}\geq
\lim_{n\rightarrow\infty}\frac{T^\nu_n}{n},\quad\prob_{\om,\nu}\text{ a.s}.
\]

\end{proof}

%\begin{prop}
%Let $\lam>0$ be of the form $\lam=\frac{n}{mn+l}$ with the same
%conditions as above. There exists a constant $D=D(n)>0$ such that
%and every $(1,\lam)$ environment $\om$ we have
%$\limsup_{N\rightarrow\infty}\frac{X_N}{N}\leq \lam-D\lam^3$.
%\end{prop}

\begin{proof}[Proof of Proposition \ref{prop_Lack_ot_tightness}]
By Proposition \ref{prop:environment_to_environment} there exists a
$\Upsilon$ environment $\nu$ such that
\[
\liminf_{N\rightarrow\infty}\frac{T_N}{N}\geq \lim_{N\rightarrow\infty}\frac{T^\nu_N}{N},
\]
so it is enough to show that for every $\Upsilon$ environment $\nu$ we have
\[
\limsup_{N\rightarrow\infty}\frac{X_N}{N}\leq \lam-D\lam^3, \text{ a.s,}
\]
for some constant $D>0$. But this indeed holds since
\[
\lam-\limsup_{N\rightarrow\infty}\frac{X_N}{N}=\lam-\frac{1}{2m+1-m(m+1)\lam}=\frac{n}{mn+l}-\frac{1}{2m+1-m(m+1)\frac{n}{nm+l}},
\]
rearranging the last expression we get
\[
=\lam^3\cdot \frac{l(n-l)}{n^2}\cdot \frac{1}{1-\frac{l(n-l)}{(mn+l)^2}}.
\]
Using the fact that $l>0$ and $n>1$ we get that the last expression is bigger than
\[
\lambda^3\cdot \left(\frac{1}{n}-\frac{1}{n^2}\right)\cdot \frac{1}{1-\lambda^2\left(\frac{1}{n}-\frac{1}{n^2}\right)}\geq \lambda^3\cdot \left(\frac{1}{n}-\frac{1}{n^2}\right).
\]
\end{proof}

%*****************************************************************************************************************************************************
%************************************ Section 6 - Additional estimations for $(p,\lam)$ environments *************************************************
%*****************************************************************************************************************************************************

\section{Transience Recurrence and the triviality of the $\limsup$}

\begin{defn}
For a $(p,\lam)$ environment $\om$ we define $S(\om)$ by
\[
S(\om)=\sum_{n=1}^{\infty}\prod_{j=1}^{n}\rho(j),
\]
where as before for $j\in\BN$
\[
\rho(j)=\frac{1-\om(j)}{\om(j)}.
\]
\end{defn}
\begin{defn}
For an environment $\om$ and $x\in\BZ$ we define $\theta^x\om$ to be the translation of $\om$ by $x$ i.e. for every $n\in\BZ$, $\theta^x\om(n)=\om(n+x)$.
\end{defn}

\begin{lem} \label{transient_recurrent}
Fix a $(p,\lam)$ environment $\om$. If $S(\om)<\infty$ then a random
walk in $\om$ is  transient to the right, i.e, for
every $x_0\in\BZ$ we have
$P_\om^{x_0}\left(\lim_{n\rightarrow\infty}X_n=\infty\right)=1$. If
$S(\om)=\infty$ then a random walk in $\om$ is recurrent, i.e, for every $x_0\in\BZ$ we have
$P_\om^{x_0}\left(-\infty=\liminf_{n\rightarrow\infty}X_n<\limsup_{n\rightarrow\infty}X_n=\infty\right)=1$.
\end{lem}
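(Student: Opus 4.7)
The plan is to reduce to the standard harmonic-function criterion for one-dimensional nearest-neighbor chains, exploiting the sign constraint $\om(x)\ge 1/2$ to rule out transience to $-\infty$. Define $\phi:\BZ\to\BR$ by $\phi(0)=0$, $\phi(1)=1$, and the two-term recursion $\phi(x+1)-\phi(x)=\rho(x)\bigl(\phi(x)-\phi(x-1)\bigr)$. Telescoping gives
\[
\phi(M)=\sum_{k=0}^{M-1}\prod_{j=1}^{k}\rho(j),\qquad -\phi(-N)=\sum_{k=1}^{N}\prod_{j=0}^{k-1}\rho(-j)^{-1},
\]
so $\phi(M)\uparrow 1+S(\om)$ as $M\to\infty$; and because $\om(x)\ge 1/2$ forces $\rho(-j)^{-1}\ge 1$, every summand on the right is at least $1$, giving $-\phi(-N)\ge N\to\infty$ regardless of the environment. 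A one-step conditioning check shows that $\phi(X_n)$ is a $\tbP^{x_0}_\om$-martingale, and applying optional stopping to the bounded stopped martingale $\phi(X_{n\wedge T_M\wedge T_{-N}})$ yields the gambler's-ruin identity
\[
\tbP^{x_0}_\om(T_M<T_{-N})=\frac{\phi(x_0)-\phi(-N)}{\phi(M)-\phi(-N)},\qquad -N<x_0<M.
\]

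If $S(\om)=\infty$, then $\phi(M)\to+\infty$. Sending $N\to\infty$ in the identity with $M$ fixed gives $\tbP^{x_0}_\om(T_M<\infty)=1$ for every $M$, so $\limsup_n X_n=+\infty$ a.s.; symmetrically, sending $M\to\infty$ with $N$ fixed gives $\tbP^{x_0}_\om(T_{-N}<\infty)=1$ for every $N$, so $\liminf_n X_n=-\infty$ a.s., yielding the claimed recurrence.

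If $S(\om)<\infty$, then $\phi$ is bounded above by $1+S(\om)$ on all of $\BZ$. On the event $\{T_{-N}=\infty\}$ the walk stays in $\{-N+1,-N+2,\ldots\}$, so $\phi(X_n)$ is confined to the bounded interval $[\phi(-N+1),\,1+S(\om)]$ and hence converges $\tbP^{x_0}_\om$-a.s.\ by the martingale convergence theorem. Because $\phi$ is strictly increasing and the chain is irreducible (the $p=1$ endpoint being handled directly, since drifts then act as one-way barriers and the walk necessarily sweeps through them in order), the only way a convergent $\phi(X_n)$ is consistent with the dynamics is $X_n\to+\infty$. Finally, sending $M\to\infty$ in the gambler's-ruin identity and using $-\phi(-N)\to\infty$ gives $\tbP^{x_0}_\om(T_{-N}=\infty)\to 1$ as $N\to\infty$, so the event $\bigcup_N\{T_{-N}=\infty\}$ has full probability, and therefore $X_n\to+\infty$ $\tbP^{x_0}_\om$-a.s. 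The main subtlety is precisely this last step: converting convergence of the martingale on each $\{T_{-N}=\infty\}$ into the pointwise conclusion $X_n\to+\infty$ requires both the strict monotonicity of $\phi$ and the fact that the chain makes nontrivial jumps, in order to exclude bounded oscillation within a sublevel set of $\phi$.
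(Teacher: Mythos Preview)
Your argument is correct and is precisely the classical harmonic-function/gambler's-ruin proof underlying Theorem~2.1.2 of Zeitouni's lecture notes, which is all the paper invokes; the paper's own proof consists solely of that citation together with the remark that $\om(x)\ge\tfrac12$ rules out transience to the left. One small point of hygiene: when you appeal to martingale convergence on $\{T_{-N}=\infty\}$, the clean formulation is to apply it to the bounded stopped martingale $\phi(X_{n\wedge T_{-N}})$ rather than to $\phi(X_n)$ restricted to an event.
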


\begin{proof}
This is a straight implication of the ideas and results of Theorem
2.1.2 of \cite{zeitouni2001lecture}. Note that since $\om(x)\ge\frac{1}{2}$ for all $x \in\Z$, the walk can not be transient to the left.
%Similar to $S(\om)$ already
%defined, we define $F(\om)$ by
%\[
%\sum_{n=0}^\infty \prod_{i=0}^{-n}\rho(i)^{-1}.
%\]
%We observe that since $\om$ is a $(p,\lam)$ environment for every
%$x\in\BZ$ we have $\om(x)\geq \frac{1}{2}$ and therefore
%$\rho(x)^{-1}\geq 1$. It therefore follows that $F(\om)=\infty$.
%Consequently, by \cite{zeitouni2001lecture}, Theorem 2.1.2, a random
%walk over the environment $\om$ is transient to the right if and
%only if $S(\om)<\infty$ and recurrent if and only if $S(\om)=\infty$
%and $F(\om)=\infty$, as required.
\end{proof}

\begin{cor}
If for a $(p,\lam)$
environment $\om$ the limit of the density exists and positive, i.e.
\[
\lim_{n\rightarrow\infty}\frac{1}{n}\sum_{i=0}^{n-1}\ind_{\{\om(i)=p\}}=\lam>0,
\]
then the random walk is transient to the right. Indeed, in this case one can fix $0<\ep<\lam$ and $x_0\in\BZ$ and then find $N\in\BN$
such that for every $n\geq N$ we have $\frac{1}{n}\sum_{i=0}^{n-1}\ind_{\{\om(i)=p\}}>\lam-\ep$. Thus for every $n\geq N$
\[
\sum_{k=1}^{n}\prod_{j=1}^{k}\rho(x+j)\leq \sum_{k=1}^{N-1}1+\sum_{k=N}^{n}{\left(\frac{1-p}{p}\right)^{(\lam-\ep)k}<N+\sum_{k=N}^\infty\left(\frac{1-p}{p}\right)^{(\lam-\ep)k}<\infty}
\]
since $\frac{1-p}{p}<1$. Therefore by taking the limit $n\rightarrow\infty$ one gets
\[
S(\theta^{x_0}\om)<\infty
\]
and so the random walk is transient to the right.
\end{cor}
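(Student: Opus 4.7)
The plan is to derive the corollary as a direct consequence of Lemma~\ref{transient_recurrent}. Since $\om(x)\ge\frac{1}{2}$ for every $x$, the walk cannot be transient to the left, so it is enough to rule out recurrence, i.e.\ to verify $S(\theta^{x_0}\om)<\infty$ for an arbitrary starting site $x_0\in\BZ$.

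First I would exploit the two-value structure of $\om$. Because each $\om(j)$ is either $\frac{1}{2}$ or $p$, the ratio $\rho(j)=\frac{1-\om(j)}{\om(j)}$ takes only the values $1$ and $q:=\frac{1-p}{p}$, and $q<1$ since $p>\frac{1}{2}$. This collapses the cumulative product into a single power, $\prod_{j=1}^{n}\rho(x_0+j)=q^{D_n(x_0)}$, where $D_n(x_0)$ counts the $p$-sites in the window $(x_0,x_0+n]$. The problem is therefore reduced to proving a linear-in-$n$ lower bound on $D_n(x_0)$.

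Next I would invoke the density hypothesis. A fixed integer shift alters the numerator of the Ces\`aro average by at most $|x_0|=o(n)$, so $D_n(x_0)/n\to\lam>0$ as well. Fixing any $\ep\in(0,\lam)$, for all sufficiently large $n$ we have $D_n(x_0)\ge(\lam-\ep)n$, and hence the tail of $S(\theta^{x_0}\om)$ is dominated by the geometric series $\sum_n q^{(\lam-\ep)n}$ with ratio $q^{\lam-\ep}<1$, which converges. Lemma~\ref{transient_recurrent} then yields transience to the right for the walk started at any $x_0$.

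No genuine obstacle is anticipated: once the two-value structure has been used to rewrite the products as a single power of $q$, the conclusion follows from a one-line geometric comparison combined with the preceding lemma. The only care point is checking that a fixed integer shift preserves the limiting density, which is immediate from the Ces\`aro definition.
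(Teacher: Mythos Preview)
Your proposal is correct and follows essentially the same route as the paper: rewrite each product $\prod_{j=1}^{k}\rho(x_0+j)$ as $q^{D_k(x_0)}$ with $q=\frac{1-p}{p}<1$, use the positive density to force $D_k(x_0)\ge(\lam-\ep)k$ for large $k$, and compare with a convergent geometric series before invoking Lemma~\ref{transient_recurrent}. If anything, you are slightly more explicit than the paper in justifying that shifting the window by a fixed $x_0$ does not affect the Ces\`aro limit, a point the paper leaves implicit.
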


Next we prove Lemma \ref{lem_as_constant}.

\begin{proof}[Proof of Lemma \ref{lem_as_constant}]
For $v\in\BR$ and $\delta>0$ we denote by $A_{v,\delta}$ the event
\[
A_{v,\delta}=\left\{\left|\limsup_{n\rightarrow\infty}\frac{X_n}{n}-v\right|<\delta\right\}.
\]

Assume that $\prob_\om^0(A_{v,\delta})>0$. Since $A_{v,\delta}\in
\sigma(X_1,X_2,\ldots)$, for every $\ep>0$, one can find $M\in\BN$
and an event $B_{v,\delta}^M\in\sigma(X_1,X_2,\ldots,X_M)$ such
that
\[\prob^0_\om(A_{v,\delta}\triangle B_{v,\delta}^M)<\epsilon.\]
Notice that for small enough $\ep>0$ this implies that
$\prob_\om^0(B_{v,\delta}^M)>\frac{\prob_\om^0(A_{v,\delta})}{2}\equiv c>0$

Since $X_n$ is a nearest neighbor random walk on $\BZ$ which starts
at the origin we have the estimate $|X_n|\leq n$, and therefore
\bae
\prob_\om^0(A_{v,\delta}\cap B_{v,\delta}^M)
&=\sum_{j=-M}^{M}\prob_\om^0(A_{v,\delta} \cap B_{v,\delta}^M \cap\{
X_M=j\})\\
&=\sum_{j=-M}^{M}\prob_\om^0(A_{v,\delta}|B_{v,\delta}^M \cap
\{X_M=j\})\cdot \prob_\om^0(B_{v,\delta}^M \cap\{ X_M=j\}). \eae
By the
Markov property of the random walk this equals to
\[
\sum_{j=-M}^{M}\prob_\om^j(A_{v,\delta})\cdot
\prob_\om^0(B_{v,\delta}^M \cap \{X_M=j\}).
\]
Dividing the last formula by $\prob_\om^0(B_{v,\delta}^M)$ we see that
\begin{equation}
\prob_\om^0(A_{v,\delta}|B_{v,\delta}^M)=\sum_{j=-M}^{M}\prob_\om^j(A_{v,\delta})\cdot
\prob_\om^0(X_M=j|B_{v,\delta}^M). \label{zero_one_1}
\end{equation}
By the choice of $B_{v,\delta}^M$ we get that \bae
\prob_\om^0(A_{v,\delta}|B_{v,\delta}^M)
&=\frac{\prob_\om^0(A_{v,\delta} \cap
B_{v,\delta}^M)}{\prob_\om^0(B_{v,\delta}^M)}=\frac{\prob_\om^0(B_{v,\delta}^M)-\prob_\om^0(B_{v,\delta}^M\backslash
A_{v,\delta})}{\prob_\om^0(B_{v,\delta}^M)}\\
&=1-\frac{\prob_\om^0(B_{v,\delta}^M\backslash
A_{v,\delta})}{\prob_\om^0(B_{v,\delta}^M)}\geq 1 -
\frac{\epsilon}{c}.
\eae
In addition we have that
\[
\sum_{j=-M}^{M}\prob_\om^0(X_M=j|B_{v,\delta}^M)=1.
\]
Using the last two observations and equation \eqref{zero_one_1} we get that for small enough $\ep>0$ there exists $M\in\BN$ and $-M\leq j \leq M$ such that
\[
\prob_\om^j(A_{v,\delta})>1-\frac{\ep}{c}>\frac{1}{2}.
\]

Assume now towards contradiction that there exist two different
values $v_1$ and $v_2$ in the support of
$\limsup_{n\rightarrow\infty}\frac{X_n}{n}$. Choose
$\delta_1,\delta_2>0$ small enough so that $A_{v_1,\delta_1}\cap
A_{v_2,\delta_2}=\emptyset$. Using the conclusion of what we
showed so far, one can find two integers $j_1$ and $j_2$ such that
\[\prob_\om^{j_1}(A_{v_1,\delta_1})>\frac{1}{2} ~~~~~~\text{and}~~~~~~ \prob_\om^{j_2}(A_{v_2,\delta_2})>\frac{1}{2}.\]
Without lost of generality we assume that $j_1<j_2$. But according
to Lemma \ref{transient_recurrent} a random walk in a $(p,\lambda)$
environment $\om$ is $\prob_\om^x$ almost surely transient to the right or $\prob_\om^x$ almost surely recurrent.
and therefore a random walk starting at $j_1$ will reach $j_2$ at
some finite random time $N$ almost surely. Consequently, if $X_n$
indeed starts at $j_1$, then
\[
\limsup_{n\rightarrow\infty}\frac{X_n}{n}=\limsup_{n\rightarrow\infty}\frac{X_{n+N}}{n+N}=\limsup_{n\rightarrow\infty}\frac{X_{n+N}}{n}.
\]
But the $limsup$ on the left is distributed according to a
random walk starting at $j_1$ and the one on the right is
distributed according to a random walk starting at $j_2$, which
gives the desired contradiction.
\end{proof}

\section{Some conjectures and questions}

In this article we studied random walks in $\BZ$ environment composed of two point types, $(\frac{1}{2}
,\frac{1}{2}\)) and $(p,1-p)$ for $p>\frac{1}{2}$. We ask for the following generalizations:
\begin{ques}
What can be said about random walks in environments of $\BZ$ composed of two types $(p,1-p)$ and $(q,1-q)$ for $\frac{1}{2}
<p<q<1$? More precisely we ask for a bound on the speed and give the following conjecture :
\end{ques}

\begin{conj}
An environment which maximize the speed is given up to some integer effect by equally spaced drifts.
\end{conj}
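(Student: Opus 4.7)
The plan is a two-stage reduction: first identify a tractable family of environments where the speed can be computed exactly, then show that no $(1,\lam)$ environment outperforms this family. Let $\Upsilon$ be the family of environments on $\BZ$ (with $p=1$) in which every inter-drift gap has length exactly $m=\lfloor 1/\lam\rfloor$ or $m+1=\lceil 1/\lam\rceil$ and drifts have asymptotic density $\lam$. Writing the gap-length densities as $\rho_m,\rho_{m+1}$, the constraints $\rho_m+\rho_{m+1}=\lam$ and $m\rho_m+(m+1)\rho_{m+1}=1$ give $\rho_m=(m+1)\lam-1$ and $\rho_{m+1}=1-m\lam$, both strictly positive precisely because $\lam\neq 1/k$.

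For $\nu\in\Upsilon$, since $p=1$ every drift point acts as a one-step reflector: between two consecutive drifts the walk is simple symmetric random walk reflected at the left drift and absorbed at the right, so by Proposition \ref{prop:finite_case_prop} (applied to a single gap with no interior drifts) the expected crossing time of a gap of length $j$ is exactly $j^2$. Crossings of different gaps are independent by the strong Markov property, so the strong law of large numbers yields
\[
\lim_{N\to\infty}\frac{T_N^\nu}{N}=\rho_m m^2+\rho_{m+1}(m+1)^2=2m+1-m(m+1)\lam\quad\text{a.s.,}
\]
and consequently $\lim_N X_N/N=[2m+1-m(m+1)\lam]^{-1}$ almost surely.

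For the reduction, fix an arbitrary $(1,\lam)$ environment $\om$ (we may assume the density limit exists, since adding drifts only speeds the walk up). I would construct $\nu\in\Upsilon$ with $\liminf_N T_N/N\geq\lim_N T_N^\nu/N$ by a gap-rearrangement procedure. The elementary move: if two gaps in $\om$ have lengths $i<k$ with $k-i\geq 2$, replace them by gaps of lengths $i+1$ and $k-1$. Since the crossing time of a gap of length $j$ is $j^2$, this strictly decreases the expected hitting time by $i^2+k^2-(i+1)^2-(k-1)^2=2(k-i-1)\geq 2$, while preserving total length and drift count. Applying this iteratively inside nested finite windows $[0,N_j]$ with $N_j\to\infty$ and density tolerances $\ep_j\downarrow 0$ produces a sequence $\zeta^{(j)}$ whose gaps inside $[0,N_j]$ all have length $m$ or $m+1$ (the density constraint rules out survivors outside $\{m,m+1\}$) and satisfies $\ev^0_{\zeta^{(j)}}[T_N]\leq \ev^0_\om[T_N]$ for $N\geq N_j$. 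The pointwise limit $\nu$ lies in $\Upsilon$. A Kolmogorov strong law argument on the independent gap crossing times for $\nu$, combined with Fatou's lemma and the almost-sure constancy of the speed from Lemma \ref{lem_as_constant}, then converts the inequality on expectations into the required almost-sure comparison of liminfs.

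Finally, with $\lam=n/(mn+l)$ and $0<l<n$, direct algebra yields
\[
\lam-\frac{1}{2m+1-m(m+1)\lam}=\frac{l(n-l)}{(mn+l)(m^2 n+2ml+l)},
\]
which is bounded below by $D(n)\lam^3$ for some $D(n)>0$ using $l(n-l)\geq n-1$ together with the identities $mn+l=n/\lam$ and $m^2 n+2ml+l=O(n/\lam^2)$. I expect the main obstacle to be the infinite gap-rearrangement step: ensuring that rearrangements done in later windows do not disturb hitting times already controlled in earlier ones (which relies on $p=1$ making gap crossings genuinely independent) and that the nested construction converges to an honest $\Upsilon$ environment with density exactly $\lam$. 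The nested-interval construction with shrinking tolerances should provide the uniform control, but the bookkeeping deserves careful attention.
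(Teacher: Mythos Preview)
The statement you are attempting to prove is a \emph{conjecture}, listed in the paper's final section among open questions; the paper offers no proof. Moreover, the conjecture is posed in the context of the preceding question about environments composed of two drift types $(p,1-p)$ and $(q,1-q)$ with $\tfrac12<p<q<1$ --- a strict generalization of the setting analyzed in the body of the paper. So there is no ``paper's own proof'' to compare against, and any genuine proof would need to address this two-drift-type setting.

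Your proposal does not address the conjecture. What you have written is, almost step for step, the paper's proof of Proposition~\ref{prop_Lack_ot_tightness} (via Propositions~\ref{prop:limit_proposition_Upsilon environment} and~\ref{prop:environment_to_environment}): the family $\Upsilon$, the gap-rearrangement move $i^2+k^2\mapsto (i+1)^2+(k-1)^2$, the nested-window construction with shrinking tolerances, and the final algebra with $\lam=n/(mn+l)$. That proposition establishes a \emph{lack of tightness} bound $\limsup X_N/N\le \lam-D\lam^3$ for $p=1$ when $\lam\neq 1/k$; it is not a proof that equally spaced drifts maximize the speed, and it says nothing about the $(p,1-p)$/$(q,1-q)$ setting of the conjecture. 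Your own caveat is the crux: the entire gap-rearrangement argument relies on $p=1$ making each drift a perfect reflector so that gap crossings are independent with expected time exactly $j^2$. For a generic pair $(p,q)$ with $p>\tfrac12$ this independence fails completely, and the convexity/rearrangement step has no obvious analogue. In short, you have reproduced a proof of a different, already-proved proposition, not a proof of the conjecture.
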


\begin{ques}
What can be said about the speed of random walks with more than one type of drifts? For example about environments composed of three types $(\frac{1}{2},\frac{1}{2})$, $(p,1-p)$ and $(q,1-q)$ for $\frac{1}{2}<p<q<1$.
\end{ques}

%*****************************************************************************************************************************************************
%******************************************** Acknowledgments ****************************************************************************************
%*****************************************************************************************************************************************************

\section*{Acknowledgments}
$~$\\
The authors would like to thank Noam Berger, Ori Parzanchevski, Ran Tessler and Ofer Zeitouni for helpful discussions. We would also like to thank the comments of an anonymous referee.\\

% *********************************************************************************************************************************
% *********************************************************************************************************************************

\bibliography{ME}
\bibliographystyle{plain}

\end{document}